\providecommand{\U}[1]{\protect\rule{.1in}{.1in}}
\newtheorem{thm}{Theorem}
\newtheorem{lem}{Lemma}
\newtheorem{exa}{Example}
\newtheorem{rem}{Remark}
\newtheorem{cor}{Corollary}
\newtheorem{defn}{Definition}
\begin{document}

\title{ Generalized Balancing and Balancing-Lucas numbers }
\author{Hasan Al-Zoubi and  Ala'a Al-Kateeb\footnote{
Yarmouk university, Irbid-Jordan, alaa.kateeb@yu.edu.jo.}}

\maketitle
\begin{abstract}In this paper, we introduce a generalization of Balancing and Balancing-Lucas numbers. We describe some of their  properties also we give the related matrix
representation and divisibility properties. 
\end{abstract}

\textbf{Keywords:} Balancing numbers, balancing-Lucas numbers, generalized Balancing numbers, generalized balancing-Lucas numbers, generating functions, Binet formula.

\textbf{Mathematics Subject Classification:} 11B39, 11C20, 15B36.

\section{Introduction} 
In mathematics, the Lucas sequences $U_{n}(P,Q) $ and $V_{n}(P,Q)$ are certain constant-recursive integer sequences that satisfy the recurrence relation
\[ x_{n}=P\cdot x_{n-1}-Q\cdot x_{n-2}\]
where $P$ and $Q$ are fixed integers. Any sequence satisfying this recurrence relation can be represented as a linear combination of the Lucas sequences $U_{n}(P,Q) $ and $V_{n}(P,Q)$.
Famous examples of Lucas sequences include the Fibonacci numbers, Mersenne, Pell, Lucas, Jacobsthal and, balancing numbers. Lucas sequences are named after the French mathematician E. Lucas. The sequences mentioned above satisfy many common properties and identities for
example Binet formulas, Catalan identities and matrix representation. Recently,
these sequences were investigated,  generalized or extended for example see \cite{AK, SW,FA,YY}.\\ When $P=6$ and $Q=1$, the sequence $x_n$ is called the sequence of Balancing numbers and denoted by $B_n$, that is \[B_n=6B_{n-1}-B_{n-2}, B_0=0, B_1=1\] 
Balancing numbers have several nice properties for example $8B_n^2+1$ is a perfect square for any integer $n$. Also, it can be easily proved that $B_n=\frac{\alpha^n-\beta^n}{\alpha-\beta}$, where $\alpha=3+2\sqrt{2}$ and $\beta=3-2\sqrt{2}$, this formula is called the Binet formula of the sequence. One may refer to \cite{PA}, where the author proved interesting properties of this sequence of numbers. Many of their arithmetic properties were studied in  \cite{PA1,DU,SA}.
 
There are some generalizations of the Balancing and Balancing-Lucas numbers.
In 2014, Tecan and others defined the $k$-balancing numbers such that, for any positive number $k$, the $k$- balancing number defined by   \(B_{k,n+1}=6kB_{k,n}-B_{k,n-1}\), 
where $n\geq 1$ and $k\geq1$, with the initials  $B_{k,0}=0$ and $B_{k,1}=1,$ see \cite{TE,RA2}.
The Gaussian balancing sequence was defined in 2018 by Tasci  in   \cite{GA} to be  the sequence of complex numbers $GB_n$
defined by the initial values $GB_0=i$ and $GB_1=1$, and the recurrence relation
$GB_{n+1}=6GB_n-GB_{n-1}$, for all  $n\geq 1$.
The first few values of are $i$,~$1$,~$6-i$,~$35-i$,...

In this study we introduce    generalizations of Balancing and Balancing-Lucas numbers and examine the properties of the new  sequences. The paper is structured as follows: in section 1 we introduce the new sequences and study their generating functions and Binet formulas. In section 2 we provide some basic identities of the generalized Balancing and balancing-Lucas numbers. Also, we find a matrix representation for both sequences and investigate some divisibility properties.
\section{Definition, Generating Functions and Binet Formula}
In this section we define the generalized Balancing  and balancing- Lucas numbers, and we derive their generating functions and Binet formulas.
\begin{defn} [Generalized Balancing Numbers] \label{d2}For $n\geq 2$ and $k\geq1 $ we define \[B_{k,n}=3kB_{k,n-1}+(1-k)B_{k,n-2}\] 
where $B_{k,0}=0$ and $B_{k,1}=1.$
\end{defn} Clearly, \begin{align*} B_{1,n}&= 3B_{1,n-1}\\
                    &= 3\cdot3\cdot B_{1,n-2}\\
                      & \vdots\\
                      &= 3^n
 \end{align*} which leads to the following remark.
\begin{rem} $B_{1,n}=3^n$ for $n\geq 2$.
\end{rem}
\begin{rem} $B_{2,n}$ is the  original sequence  of Balancing numbers. That is, $B_{2,n}=6B_{2,n-1}-B_{2,n-2}$.
\end{rem} 
\begin{defn} [Generalized Balancing-Lucas Numbers]\label{d1} For $n\geq 2$ and $k\geq1 $ we define \[C_{k,n}=3kC_{k,n-1}+(1-k)C_{k,n-2}\] 
where $C_{k,0}=1$ and $C_{k,1}=3.$
\end{defn}
\begin{rem} $C_{1,n}=3^n$ for $n\geq 2$.
\end{rem}
\begin{rem} $C_{2,n}$ is the  original sequence  of Balancing-Lucas numbers. That is, $C_{2,n}=6C_{2,n-1}-C_{2,n-2}$.
\end{rem}

\begin{exa} Consider the values of $B_{k,n}, C_{k,n}$ for different values of $k$
\begin{center}$\begin{array}{|c|c|c|c|c|c|c|c|c|c|c|c|c|c|c|c|c|c|c|c|} \hline
n & B_{1,n} & B_{2,n} & B_{3,n} & B_{4,n}&B_{k,n}&C_{1,n} & C_{2,n} & C_{3,n} & C_{4,n} & C_{k,n} \\\hline
0 &  0& 0&0 &    0&0&1&1&1&1&1\ \\\hline
1 & 1 & 1 & 1 &1&1 &3&3&3&3&3      \\\hline
2 & 9 & 6 & 9&  12&3k&9 &17 &25&33 &8k+1\\\hline
3 & 27 & 35& 79&  141&9k^2-k+1&27&99 &219&387&24k^2+3  \\\hline
4 & 81 & 1189&693 &1656 &3k(9k^2-2k+2)&81&577 &1921 &4545&27k^3-8k^2+16k+1  \\\hline
5 & 243 &6930 & 6079&   19449 &*&243&3363&16851&53379&* \\\hline
\end{array}$
\end{center}
\end{exa}

\begin{thm}[Binet Formulas]\label{thm:1} Let $k\geq 1$ and $n\geq 2$. Then we have \begin{enumerate}
\item $B_{k,n}=\frac{\alpha^n-\beta^n}{\alpha-\beta}$. 
\item  $C_{k,n}= \frac{\beta-3}{\beta-\alpha} \alpha^n+ \frac{3-\alpha}{\beta-\alpha}\beta^n$.
\end{enumerate}
 
Where $\alpha=\frac{3k+\sqrt{9k^2-4k+4}}{2}, \beta=\frac{3k-\sqrt{9k^2-4k+4}}{2}$.
\end{thm}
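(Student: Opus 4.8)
The plan is to verify both Binet formulas by the standard characteristic-root method. First I would observe that both $B_{k,n}$ and $C_{k,n}$ satisfy the same linear recurrence $x_n = 3k\,x_{n-1} + (1-k)x_{n-2}$, whose characteristic equation is $x^2 - 3k\,x - (1-k) = 0$. Solving by the quadratic formula gives the two roots $\alpha,\beta = \tfrac{3k \pm \sqrt{9k^2 + 4(1-k)}}{2} = \tfrac{3k \pm \sqrt{9k^2 - 4k + 4}}{2}$, which is exactly the stated pair of roots. I would also record the elementary relations $\alpha + \beta = 3k$, $\alpha\beta = k-1$, and $\alpha - \beta = \sqrt{9k^2 - 4k + 4}$, since these will be used to match initial conditions; I should note in passing that the discriminant $9k^2 - 4k + 4 = 9(k - \tfrac29)^2 + \tfrac{32}{9} > 0$ so the roots are real and distinct for all $k \ge 1$, hence the general solution of the recurrence is $x_n = A\alpha^n + B\beta^n$ for constants $A, B$ determined by $x_0, x_1$.

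For part (1), I would write $B_{k,n} = A\alpha^n + B\beta^n$ and impose $B_{k,0} = 0$, $B_{k,1} = 1$. The first condition gives $A + B = 0$, i.e. $B = -A$; the second gives $A\alpha + B\beta = A(\alpha - \beta) = 1$, so $A = \tfrac{1}{\alpha - \beta}$ and $B = -\tfrac{1}{\alpha - \beta}$. Substituting back yields $B_{k,n} = \tfrac{\alpha^n - \beta^n}{\alpha - \beta}$, as claimed. For part (2), I would similarly set $C_{k,n} = A\alpha^n + B\beta^n$ with $C_{k,0} = 1$, $C_{k,1} = 3$, giving the linear system $A + B = 1$ and $A\alpha + B\beta = 3$. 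Solving (e.g. by Cramer's rule): $A = \tfrac{3 - \beta}{\alpha - \beta} = \tfrac{\beta - 3}{\beta - \alpha}$ and $B = \tfrac{\alpha - 3}{\beta - \alpha} = \tfrac{3 - \alpha}{\beta - \alpha}$, matching the stated coefficients exactly. Since a solution of a second-order linear recurrence is uniquely determined by its first two terms, these closed forms agree with $B_{k,n}$ and $C_{k,n}$ for all $n \ge 0$ (in particular for all $n \ge 2$).

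There is no serious obstacle here — the argument is entirely routine once the characteristic roots are identified. The only point requiring a modicum of care is the bookkeeping of signs when solving the $2\times 2$ systems and presenting the coefficients in the exact form stated in the theorem (the denominators are written as $\beta - \alpha$ rather than $\alpha - \beta$), and confirming that the discriminant is never zero so that the two-root formula is valid for every admissible $k$. I would present the proof as two short paragraphs, one per part, after the shared setup on the characteristic equation and its roots.
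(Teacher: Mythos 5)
Your proof is correct, but it takes a genuinely different route from the paper. The paper proves the Binet formula for $B_{k,n}$ by induction: it checks the base case $n=2$, assumes the formula for smaller indices, and pushes it forward through the recurrence using the identity $3k\alpha+(1-k)=\alpha^2$ (and likewise for $\beta$), then asserts that the $C_{k,n}$ case is ``similar.'' You instead use the standard characteristic-root method: solve $x^2-3kx-(1-k)=0$, check that the discriminant $9k^2-4k+4$ is positive so the roots $\alpha,\beta$ are real and distinct, invoke the general solution $x_n=A\alpha^n+B\beta^n$, and fix $A,B$ from the initial values. Your approach handles both sequences uniformly with no induction beyond the (standard, citable) fact that a second-order recurrence with distinct characteristic roots has general solution $A\alpha^n+B\beta^n$; the paper's induction is more self-contained but verifies only one of the two formulas in detail, and as a two-term recurrence it really needs two base values rather than the single case $n=2$ it checks. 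One small slip in your write-up: in solving for the second coefficient of $C_{k,n}$ you write $B=\tfrac{\alpha-3}{\beta-\alpha}=\tfrac{3-\alpha}{\beta-\alpha}$; the first expression has the wrong sign (Cramer's rule gives $B=\tfrac{3-\alpha}{\beta-\alpha}$ directly), though your final stated coefficients are the correct ones, so this is a typo rather than a gap.
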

\begin{proof}

We prove both identities by mathematical induction. For $B_{k,n}$:\begin{itemize}
\item if $m=2$ ; $\frac{\alpha^2-\beta^2}{\alpha-\beta}=\alpha+\beta= \frac{3k+ \sqrt{9{k}^{2}-4k+4}}{2}+\frac{3k-\sqrt{9{k}^{2}-4k+4}}{2}=\frac{6k}{2}=3k=B_{k,2}$,
\item  Assume that $B_{k,n}=\frac{\alpha^n-\beta^n}{\alpha-\beta}$, 
\item  Consider 
\begin{align*}
 B_{k,n+1}&=3kB_{k,n}+(1-n)B_{k,n-1}\\&=3k \frac{\alpha^n -\beta^n}{\alpha-\beta}+(1-k)\frac{\alpha^{n-1}-\beta^{n-1}}{\alpha-\beta} \\
&=\frac{3k\alpha^{n}-3k\beta^{n}+\alpha^{n-1}\beta^{n-1}-k\alpha^{n-1}+k\beta^{n-1}}{\alpha-\beta}\\ &=\frac{\alpha^{n-1}(3k\alpha+1-k)-\beta^{n-1}(3k\beta+1-k)}{\alpha-\beta}\\ &=\frac{\alpha^{n-1}\alpha^{2}-\beta^{n-1}\beta^{2}}{\alpha-\beta}\\
&=\frac{\alpha^{n+1}-\beta^{n+1}}{\alpha-\beta}.
\end{align*}
\end{itemize} 
 Proving the formula for $C_{k,n}$ also can be done similarly by induction.
\end{proof} 
\begin{rem} For  $n \geq2$, we have  ${\alpha\beta}=k-1$.
\end{rem}

Next we extend Definition \ref{d2} to negative indices as follows:  \[B_{k,-n}=\frac{\alpha^{-n}-\beta^{-n}}{\alpha-\beta}=(\frac{1}{\alpha^n \beta^n})(-B_{k,n})\]
but, \[\frac{1}{\alpha^n\beta^n}=(\frac{4}{9k^2- 9k^2+4k-4})^{n} =(\frac{1}{k-1})^n \]
thus, $B_{k,-n}=-(k-1)^{-n} B_{k,n}$
 \begin{defn}\label{def:3}  $B_{k,-n}= -(1-k)^{-n} B_{k,n}$.
\end{defn}

\begin{thm}[Generating Functions]\label{thm:2} The generating functions of the sequences $B_{k,n}$
and $C_{k,n}$ respectively are \begin{enumerate}
\item $B(x)=\frac{x}{1-3kx+(k-1)x^2}$
\item $C(x)=\frac{1+3x(1+k)}{1-3kx+(k-1)x^2}$
\end{enumerate}
\end{thm}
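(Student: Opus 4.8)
The plan is to derive each generating function directly from the recurrence, using the standard formal-power-series bookkeeping that is routine for constant-recursive sequences. For the balancing numbers, set $B(x)=\sum_{n\ge 0}B_{k,n}x^n$. Multiply the recurrence $B_{k,n}=3kB_{k,n-1}+(1-k)B_{k,n-2}$ (valid for $n\ge 2$) by $x^n$ and sum over $n\ge 2$. The left side is $B(x)-B_{k,0}-B_{k,1}x=B(x)-x$; the first term on the right is $3kx\sum_{n\ge 2}B_{k,n-1}x^{n-1}=3kx(B(x)-B_{k,0})=3kxB(x)$; the second term is $(1-k)x^2\sum_{n\ge 2}B_{k,n-2}x^{n-2}=(1-k)x^2B(x)$. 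Collecting terms gives $B(x)-x=3kxB(x)+(1-k)x^2B(x)$, hence $B(x)(1-3kx-(1-k)x^2)=x$, i.e. $B(x)=\dfrac{x}{1-3kx+(k-1)x^2}$, which is exactly the claimed expression.

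For the balancing-Lucas numbers, run the identical computation with $C(x)=\sum_{n\ge 0}C_{k,n}x^n$ and initial values $C_{k,0}=1$, $C_{k,1}=3$. Summing the recurrence over $n\ge 2$ gives $C(x)-1-3x=3kx(C(x)-1)+(1-k)x^2C(x)$, so $C(x)(1-3kx-(1-k)x^2)=1+3x-3kx=1+3x(1-k)$. This yields $C(x)=\dfrac{1+3(1-k)x}{1-3kx+(k-1)x^2}$. I should flag here that the numerator the paper states, $1+3x(1+k)$, disagrees with what the recurrence produces: tracking the algebra, the shift $3kx(C(x)-C_{k,0})$ subtracts $3kx$, leaving $1+3x-3kx=1+3(1-k)x$, not $1+3(1+k)x$. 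I would either correct the statement to $C(x)=\frac{1+3(1-k)x}{1-3kx+(k-1)x^2}$ or, if the paper's convention is fixed, present the derivation and note the sign; a quick sanity check against the table ($C_{k,0}=1$, $C_{k,1}=3$, $C_{k,2}=8k+1$) confirms the denominator $1-3kx+(k-1)x^2$ and the numerator $1+3(1-k)x$ reproduce $C_{k,2}=3k\cdot 3+(1-k)\cdot 1=9k+1-k=8k+1$ correctly, whereas $1+3(1+k)x$ does not.

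Alternatively, one can prove both formulas from the Binet expressions of Theorem~\ref{thm:1} by summing geometric series: $\sum_{n\ge0}\alpha^n x^n=\frac{1}{1-\alpha x}$ and $\sum_{n\ge0}\beta^n x^n=\frac{1}{1-\beta x}$, so $B(x)=\frac{1}{\alpha-\beta}\left(\frac{1}{1-\alpha x}-\frac{1}{1-\beta x}\right)=\frac{(\alpha-\beta)x}{(\alpha-\beta)(1-(\alpha+\beta)x+\alpha\beta x^2)}$, and using $\alpha+\beta=3k$, $\alpha\beta=k-1$ (the Remark following Theorem~\ref{thm:1}) this collapses to $\frac{x}{1-3kx+(k-1)x^2}$; the same substitution handles $C(x)$. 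The only genuine obstacle is bookkeeping: making sure the index shifts in the partial sums are done correctly and that the low-order terms ($n=0,1$) are accounted for, since that is precisely where the numerator's constant and linear coefficients come from — and, as noted, it is exactly there that the stated numerator for $C(x)$ appears to need a sign correction.
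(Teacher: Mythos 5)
Your proof is correct and follows essentially the same route as the paper: multiply the recurrence by $x^n$, sum over $n\ge 2$, shift indices, and solve for $B(x)$ and $C(x)$. The discrepancy you flag is real, but it is a typo in the theorem statement rather than an error in your derivation: the paper's own proof also arrives at $C(x)=\frac{1+3x(1-k)}{1-3kx+(k-1)x^2}$, which agrees with your computation and with the table value $C_{k,2}=8k+1$, so the numerator in the statement should read $1+3x(1-k)$, not $1+3x(1+k)$.
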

\begin{proof}Let $B(x)$ and $C(x)$ represents the generating functions of $B_{k,n}$ and $C_{k,n}$ respectively.
Note,
 \begin{align*} B(x)&= \sum_{n=0}^{\infty} B_{k,n}x^n\\
                    &= B_{k,0}+B_{k,1}x+\sum_{n=2}^{\infty} (3kB_{k,n-1}+(1-k)B_{k,n-2})x^n\\
                      &= x+3kx\sum_{n=2}^{\infty} B_{k,n-1}x^{n-1}+(1-k)x^2\sum_{n=2}^{\infty}B_{k,n-2}x^{n-2}\\
                      &= x+3kx\sum_{n=0}^{\infty} B_{k,n}x^{n}+(1-k)x^2\sum_{n=0}^{\infty}B_{k,n}x^{n}\\
                      &= x+3kx B(x)+(1-k)x^2B(x)\\
 \end{align*}
 thus $B(x)(1-3kx+(k-1)x^2)=x\Rightarrow B(x)=\frac{x}{(1-3kx+(k-1)x^2)}$.
 Next for $C_{k,n},$  Note\begin{align*} C(x)&=\sum_{n=0}^{\infty} C_{k,n}x^n\\ &=C_{k,0}+C_{k,1}x+\sum_{n=2}^{\infty}(3k C_{k,n-1}+(1-k)C_{k,n-2})x^n \\ &=1+3x+3kx \sum_{n=2}^{\infty}C_{k,n-1}x^{n-1}+(1-k)x^2 \sum_{n=2}^{\infty}C_{k,n-2}x^{n-2} \\ &=1+3x+3kx \sum_{n=1}^{\infty}C_{k,n}x^{n}+(1-k)x^2 \sum_{n=0}^{\infty}C_{k,n}x^{n} \\ &=1+3x+3kx (-1+\sum_{n=0}^{\infty}C_{k,n}x^{n})+(1-k)x^2 \sum_{n=0}^{\infty}C_{k,n}x^{n} \\ &=1+3x-3kx+3kx \sum_{n=0}^{\infty}C_{k,n}x^{n}+(1-k)x^2 \sum_{n=0}^{\infty}C_{k,n}x^{n} \\ &=1+3x-3kx+3kx C(x)+(1-k)x^2 C(x)=C(x)\\
 \end{align*}
 then,\\
     \[C(x)=\frac{1+3x-3kx}{1-3kx+(k-1)x^2}\]
     \[=\frac{1+3x(1-k)}{1-3kx+(k-1)x^2}\]
\end{proof}

\section{Main Results}
\subsection{Basic Identities} In this subsection we provide several basic identities like Catalan identity, d' Ocagnes identity, and Vajda's identity. Also  we give the sum of terms of the first $n$ terms of the sequences $B_{k,m}$ and $C_{k,m}$. 
\begin{thm}[Catalan's Identity 1]\label{thm:5}~Let $n\geq 2$ be an integer. Then \[B_{k,n+r}B_{k,n-r}-B_{k,n}^2=-(k-1)^{n-r} (B_{k,r})^2\]
\end{thm}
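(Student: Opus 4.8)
The plan is to prove this directly from the Binet formula of Theorem \ref{thm:1}, namely $B_{k,m} = \frac{\alpha^m - \beta^m}{\alpha - \beta}$, together with the relation $\alpha\beta = k-1$ recorded in the Remark that follows it. First I would expand the product
\[
B_{k,n+r}B_{k,n-r} = \frac{(\alpha^{n+r}-\beta^{n+r})(\alpha^{n-r}-\beta^{n-r})}{(\alpha-\beta)^2} = \frac{\alpha^{2n} + \beta^{2n} - \alpha^{n+r}\beta^{n-r} - \alpha^{n-r}\beta^{n+r}}{(\alpha-\beta)^2},
\]
and likewise write $B_{k,n}^2 = \frac{\alpha^{2n} + \beta^{2n} - 2(\alpha\beta)^n}{(\alpha-\beta)^2}$. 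Subtracting, the $\alpha^{2n}$ and $\beta^{2n}$ terms cancel and I am left with
\[
B_{k,n+r}B_{k,n-r} - B_{k,n}^2 = \frac{2(\alpha\beta)^n - \alpha^{n+r}\beta^{n-r} - \alpha^{n-r}\beta^{n+r}}{(\alpha-\beta)^2}.
\]

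The next step is to factor $(\alpha\beta)^{n-r}$ out of the numerator. Using $\alpha^{n+r}\beta^{n-r} = (\alpha\beta)^{n-r}\alpha^{2r}$, $\alpha^{n-r}\beta^{n+r} = (\alpha\beta)^{n-r}\beta^{2r}$ and $2(\alpha\beta)^n = 2(\alpha\beta)^{n-r}(\alpha\beta)^r$, the numerator becomes $-(\alpha\beta)^{n-r}\bigl(\alpha^{2r} - 2(\alpha\beta)^r + \beta^{2r}\bigr) = -(\alpha\beta)^{n-r}(\alpha^r - \beta^r)^2$. Dividing by $(\alpha-\beta)^2$ and recognizing $\bigl(\tfrac{\alpha^r-\beta^r}{\alpha-\beta}\bigr)^2 = B_{k,r}^2$ yields $-(\alpha\beta)^{n-r}B_{k,r}^2$, and substituting $\alpha\beta = k-1$ gives the claimed $-(k-1)^{n-r}B_{k,r}^2$.

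There is no genuinely hard step here; the whole argument is a short symmetric manipulation of $\alpha$ and $\beta$. The only point that deserves care is the range of indices: Theorem \ref{thm:1} states the Binet formula for $n \ge 2$, while $n-r$ may be $0$, $1$, or negative. For $m = 0, 1$ the formula holds by direct check, and the extension of $B_{k,m}$ to negative indices in Definition \ref{def:3} was made precisely so that $B_{k,m} = \frac{\alpha^m-\beta^m}{\alpha-\beta}$ persists; hence the computation above is valid for all indices occurring in the statement. (If one prefers to avoid this observation, the identity can instead be obtained by fixing $n$ and inducting on $r$ via the recurrence $B_{k,m} = 3kB_{k,m-1} + (1-k)B_{k,m-2}$, but the Binet route is cleaner and I would present that one.)
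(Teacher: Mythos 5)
Your proposal is correct and follows essentially the same route as the paper's proof: expand via the Binet formula, cancel the $\alpha^{2n}$ and $\beta^{2n}$ terms, factor out $(\alpha\beta)^{n-r}$, recognize $B_{k,r}^2$, and substitute $\alpha\beta=k-1$. Your added remark on the validity of the Binet formula for small or negative index $n-r$ is a point the paper silently skips, but it does not change the argument.
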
\begin{proof} By the Binet formula we have:
\begin{align*}B_{k,n+r}B_{k,n-r}-B_{k,n}^2=&\frac{\alpha^{n+r}-\beta^{n+r}}{\alpha-\beta}\frac{\alpha^{n-r}-\beta^{n-r}}{\alpha-\beta}-\left(\frac{\alpha^{n}-\beta^{n}}{\alpha-\beta}\right)^2\\
&= \frac{\alpha^{2n}-\alpha^{n+r}\beta^{n-r}-\alpha^{n-r}\beta^{n+r}+\beta^{2n}}{(\alpha-\beta)^2}\\&-\frac{\alpha^{2n}-2\alpha^n\beta^n+\beta^{2n}}{(\alpha-\beta)^2}\\
&= \frac{2\alpha^n\beta^n-\alpha^{n+r}\beta^{n-r}-\alpha^{n-r}\beta^{n+r}}{(\alpha-\beta)^2}\\
&=(\alpha\beta)^{n-r} \frac{2\alpha^r\beta^r-\alpha^{2r}-\beta^{2r}}{(\alpha-\beta)^2}\\
&=-(\alpha\beta)^{n-r} \frac{\alpha^{2r}-2\alpha^r\beta^r+\beta^{2r}}{(\alpha-\beta)^2}\\
&=-(\alpha\beta)^{n-r} (B_{k,r})^2 \end{align*}
\end{proof}
\begin{thm}[Catalan's Identity 2]\label{thm:55}~Let $n\geq 2$ be an integer. Then \[C_{k,n+r}C_{k,n-r}-C_{k,n}^2=8(k-1)^{n-r+1} B^2_{k,r}\]
\end{thm}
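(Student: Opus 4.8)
The plan is to mimic the proof of Theorem~\ref{thm:5}, but starting from the Binet formula of Theorem~\ref{thm:1}(2) for the Balancing--Lucas numbers. Write $C_{k,n}=A\alpha^{n}+D\beta^{n}$ with $A=\frac{\beta-3}{\beta-\alpha}$ and $D=\frac{3-\alpha}{\beta-\alpha}$. First I would expand
\[
C_{k,n+r}C_{k,n-r}-C_{k,n}^{2}=\bigl(A\alpha^{n+r}+D\beta^{n+r}\bigr)\bigl(A\alpha^{n-r}+D\beta^{n-r}\bigr)-\bigl(A\alpha^{n}+D\beta^{n}\bigr)^{2},
\]
and note that the $A^{2}\alpha^{2n}$ and $D^{2}\beta^{2n}$ terms cancel, leaving
\[
C_{k,n+r}C_{k,n-r}-C_{k,n}^{2}=AD\bigl(\alpha^{n+r}\beta^{n-r}+\alpha^{n-r}\beta^{n+r}-2\alpha^{n}\beta^{n}\bigr)=AD(\alpha\beta)^{n-r}\bigl(\alpha^{r}-\beta^{r}\bigr)^{2}.
\]
Since $(\alpha^{r}-\beta^{r})^{2}=(\alpha-\beta)^{2}B_{k,r}^{2}$ by the Binet formula for $B_{k,r}$, the claim reduces to establishing $AD(\alpha-\beta)^{2}(\alpha\beta)^{n-r}=8(k-1)^{n-r+1}$.

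Next I would evaluate the constant $AD(\alpha-\beta)^{2}$. One has
\[
AD=\frac{(\beta-3)(3-\alpha)}{(\beta-\alpha)^{2}}=\frac{-\bigl(\alpha\beta-3\alpha-3\beta+9\bigr)}{(\alpha-\beta)^{2}},
\]
so $AD(\alpha-\beta)^{2}=-\bigl(\alpha\beta-3(\alpha+\beta)+9\bigr)$. Using $\alpha+\beta=3k$ together with the earlier remark that $\alpha\beta=k-1$, we get $\alpha\beta-3(\alpha+\beta)+9=(k-1)-9k+9=-8(k-1)$, hence $AD(\alpha-\beta)^{2}=8(k-1)$. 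Applying $\alpha\beta=k-1$ once more gives $(\alpha\beta)^{n-r}=(k-1)^{n-r}$, and combining,
\[
C_{k,n+r}C_{k,n-r}-C_{k,n}^{2}=8(k-1)(k-1)^{n-r}B_{k,r}^{2}=8(k-1)^{n-r+1}B_{k,r}^{2},
\]
as claimed.

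The expansion in the first paragraph is formally identical to that in Theorem~\ref{thm:5} and is routine. The one step that needs genuine care is the coefficient computation $AD(\alpha-\beta)^{2}=8(k-1)$: one must correctly handle the sign in $(\beta-3)(3-\alpha)=-(\alpha-3)(\beta-3)$ and verify that the mixed term $-3(\alpha+\beta)$ combines with $\alpha\beta$ and the constant $9$ to produce exactly $-8(k-1)$. It is also worth observing the degenerate case $k=1$, where $\alpha\beta=0$ and $C_{1,n}=3^{n}$, so both sides vanish for $n>r$, consistent with the formula; since $9k^{2}-4k+4>0$ for all real $k$, the factor $(\alpha-\beta)^{2}$ is nonzero and the division above is always legitimate.
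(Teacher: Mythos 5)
Your proposal is correct and follows essentially the same route as the paper's own proof: expand via the Binet formula $C_{k,n}=A\alpha^{n}+D\beta^{n}$, cancel the pure $\alpha^{2n}$, $\beta^{2n}$ terms, factor out $(\alpha\beta)^{n-r}=(k-1)^{n-r}$, and evaluate the constant $(\beta-3)(3-\alpha)=AD(\alpha-\beta)^{2}=8(k-1)$. Your version is in fact written out more carefully (the sign check on the constant and the $(\alpha^{r}-\beta^{r})^{2}=(\alpha-\beta)^{2}B_{k,r}^{2}$ step), where the paper's displayed computation contains a few typographical slips.
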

\begin{proof}Let $A= \frac{\beta-3}{\beta-\alpha}$ and $B= \frac{3-\alpha}{\beta-\alpha}$. Then 
\begin{align*}C_{k,n+r}C_{k,n-r}-C_{k,n}^2&= (A\alpha^{n+r}+B\beta^{n+r})(A\alpha^{n-r}+B\beta^{n-r})\\
& \cdot (A\alpha^{n}+B\beta^{n})^2\\
&= AB(\alpha^{n+r}\beta^{n-r}+\alpha^{n-r}\beta^{n+2}-2\alpha^n\beta^n)\\
&= AB (\alpha \beta)^{n-r}(\alpha^{2r}+\beta^{2r}-2\alpha^r\beta^r)\\
&= (\beta-3)(3-\alpha) (\alpha \beta)^{n-r} \frac{ (\alpha^{r}-\beta^{r})^2}{\alpha-\beta^2}\\
&=  8(k-1)^{n-r+1} B_{k,n}^2&(\beta-3)(3-\alpha)=8(k-1) \\
\end{align*}
\end{proof}
\begin{thm}[ d' Ocagne's identity 1] For any two integers $m$ and $n$ with $m\geq n$\[B_{k,m}B_{k,n+1}-B_{k,n}B_{k,m+1}=(k-1)^nB_{k,m-n}\]
\end{thm}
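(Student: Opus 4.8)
The plan is to prove d'Ocagne's identity by direct substitution of the Binet formula from Theorem \ref{thm:1}, item (1), exactly in the style of the proofs of Catalan's identities above. First I would write $B_{k,m} = \frac{\alpha^m - \beta^m}{\alpha - \beta}$ and similarly for the other three terms, so that the left-hand side becomes a single fraction over $(\alpha - \beta)^2$. Expanding the two products in the numerator, the pure powers $\alpha^{m+n+1}$ and $\beta^{m+n+1}$ cancel, leaving
\begin{align*}
B_{k,m}B_{k,n+1} - B_{k,n}B_{k,m+1}
&= \frac{-\alpha^m\beta^{n+1} - \alpha^{n+1}\beta^m + \alpha^n\beta^{m+1} + \alpha^{m+1}\beta^n}{(\alpha-\beta)^2}.
\end{align*}

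Next I would factor $(\alpha\beta)^n$ out of every term in the numerator, which is legitimate since each monomial has total degree $m+n+1$ and the smaller exponent among $\{\alpha,\beta\}$ is always at least $n$. This yields
\begin{align*}
B_{k,m}B_{k,n+1} - B_{k,n}B_{k,m+1}
&= (\alpha\beta)^n \cdot \frac{\alpha^{m+1-n}\beta^{0} - \alpha^{m-n}\beta - \alpha\beta^{m-n} + \beta^{m+1-n}}{(\alpha-\beta)^2}\\
&= (\alpha\beta)^n \cdot \frac{(\alpha^{m-n} - \beta^{m-n})(\alpha - \beta)}{(\alpha-\beta)^2}\\
&= (\alpha\beta)^n \cdot \frac{\alpha^{m-n} - \beta^{m-n}}{\alpha - \beta}.
\end{align*}
Then I would invoke the remark that $\alpha\beta = k - 1$ and recognize the remaining fraction as $B_{k,m-n}$ by the Binet formula, giving $(k-1)^n B_{k,m-n}$ as required; the hypothesis $m \geq n$ is exactly what guarantees $m - n \geq 0$ so that $B_{k,m-n}$ is a genuine (non-negative-index) term of the sequence.

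The only real obstacle is the bookkeeping in the middle step: one must check that the four-term numerator genuinely factors as $(\alpha^{m-n}-\beta^{m-n})(\alpha-\beta)$ rather than something messier. Grouping as $\alpha(\alpha^{m-n} - \beta^{m-n}) - \beta(\alpha^{m-n} - \beta^{m-n})$ makes this transparent, so no delicate algebra is involved. One caveat worth noting: the identity as stated uses $(k-1)^n$, whereas Definition \ref{def:3} and the Catalan identities are phrased with $-(1-k)^{\bullet}$; since $\alpha\beta = k-1$ directly, the $(k-1)^n$ form drops out naturally, and for even exponents this coincides with $(1-k)^n$, so the statement is consistent. If one prefers, the same argument runs by induction on $m$ for fixed $n$ using the recurrence $B_{k,n} = 3kB_{k,n-1} + (1-k)B_{k,n-2}$, but the Binet approach is shorter and matches the surrounding exposition.
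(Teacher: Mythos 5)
Your proof is correct and follows essentially the same route as the paper: substitute the Binet formula, cancel the $\alpha^{m+n+1}$ and $\beta^{m+n+1}$ terms, factor out $(\alpha\beta)^n=(k-1)^n$, and recognize the remaining factor $\frac{\alpha^{m-n}-\beta^{m-n}}{\alpha-\beta}$ as $B_{k,m-n}$. No meaningful differences to report.
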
\begin{proof}Note

\begin{align*}B_{k,m}B_{k,n+1}-B_{k,n}B_{k,m+1}&=\frac{(\alpha^m-\beta^m)(\alpha^{n+1}-\beta^{n+1})-(\alpha^n-\beta^n)(\alpha^{m+1}-\beta^{m+1})}{(\alpha-\beta)^2}\\
&= \frac{-\alpha^m\beta^{n+1}-\alpha^{n+1}\beta^{m}+\alpha^n\beta^{m+1}+\alpha^{m+1}\beta^{n}}{(\alpha-\beta)^2}\\
&= \frac{(\alpha\beta)^n}{(\alpha-\beta)^2}(-\beta\alpha^{m-n}-\alpha\beta^{m-n}+\beta^{m-n+1}+\alpha^{m-n+1})\\ &= \frac{(\alpha\beta)^n}{(\alpha-\beta)^2}((\alpha-\beta)\alpha^{m-n}-(\alpha-\beta)\beta^{m-n})\\
&= (k-1)^nB_{k,m-n} \end{align*}\end{proof}
\begin{thm}[d' Ocagne's identity $2$] For any two integers $m$ and $n$ with $m\geq n$ we have \[C_{k,m}C_{k,n+1}-C_{k,n}C_{k,m+1}=-8(k-1)^{n+1}B_{k,m-n}\]
\end{thm}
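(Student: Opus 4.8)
The plan is to mimic the proof of d'Ocagne's identity 1 verbatim, substituting the Binet formula for $C_{k,n}$ from Theorem~\ref{thm:1}. Write $A = \frac{\beta-3}{\beta-\alpha}$ and $B = \frac{3-\alpha}{\beta-\alpha}$ as in the proof of Catalan's identity~2, so that $C_{k,n} = A\alpha^n + B\beta^n$. First I would expand the product $C_{k,m}C_{k,n+1} - C_{k,n}C_{k,m+1}$ using this formula; the $A^2$ terms ($\alpha^{m}\alpha^{n+1}$ and $\alpha^{n}\alpha^{m+1}$) cancel, and likewise the $B^2$ terms cancel, leaving only the mixed $AB$ terms:
\[
C_{k,m}C_{k,n+1} - C_{k,n}C_{k,m+1} = AB\bigl(\alpha^m\beta^{n+1} + \alpha^{n+1}\beta^m - \alpha^n\beta^{m+1} - \alpha^{m+1}\beta^n\bigr).
\]

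Next I would factor out $(\alpha\beta)^n$ from the bracket, exactly as in the proof of d'Ocagne's identity~1, obtaining $(\alpha\beta)^n\bigl(\beta\alpha^{m-n} + \alpha\beta^{m-n} - \beta^{m-n+1} - \alpha^{m-n+1}\bigr) = -(\alpha\beta)^n(\alpha-\beta)\bigl(\alpha^{m-n} - \beta^{m-n}\bigr)$. Then $(\alpha\beta)^n = (k-1)^n$ by the remark following Theorem~\ref{thm:1}, and $\frac{\alpha^{m-n}-\beta^{m-n}}{\alpha-\beta} = B_{k,m-n}$ by the Binet formula; note the sign flip relative to identity~1 because the roles of the two terms in the bracket are reversed. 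So the expression becomes $-AB(\alpha-\beta)^2 (k-1)^n B_{k,m-n}$, up to keeping track of one more factor of $(\alpha-\beta)$ hidden in $B_{k,m-n}$; more precisely, after factoring we have $AB \cdot (k-1)^n \cdot \bigl(-(\alpha-\beta)\bigr)(\alpha^{m-n}-\beta^{m-n}) = -AB(\alpha-\beta)^2 (k-1)^n B_{k,m-n}$.

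Finally I would evaluate the scalar $AB(\alpha-\beta)^2$. We have $AB = \frac{(\beta-3)(3-\alpha)}{(\beta-\alpha)^2} = \frac{(\beta-3)(3-\alpha)}{(\alpha-\beta)^2}$, so $AB(\alpha-\beta)^2 = (\beta-3)(3-\alpha)$, and the proof of Catalan's identity~2 already records $(\beta-3)(3-\alpha) = 8(k-1)$. Substituting gives $C_{k,m}C_{k,n+1} - C_{k,n}C_{k,m+1} = -8(k-1)^{n+1}B_{k,m-n}$, as claimed. The only mildly delicate point is bookkeeping the sign and the powers of $(\alpha-\beta)$ correctly when passing from the mixed-term bracket to $B_{k,m-n}$; the identity $(\beta-3)(3-\alpha) = 8(k-1)$ can be verified directly from $\alpha+\beta = 3k$ and $\alpha\beta = k-1$ by expanding $(\beta-3)(3-\alpha) = 3\alpha + 3\beta - \alpha\beta - 9 = 9k - (k-1) - 9 = 8k - 8$.
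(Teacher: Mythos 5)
Your proposal is correct and follows essentially the same route as the paper's own proof: expand via the Binet form $C_{k,n}=A\alpha^n+B\beta^n$, note the pure $A^2$ and $B^2$ terms cancel, factor out $(\alpha\beta)^n=(k-1)^n$, and evaluate $(\beta-3)(3-\alpha)=8(k-1)$. Your sign and $(\alpha-\beta)$ bookkeeping is accurate, so no changes are needed.
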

\begin{proof}Let $A= \frac{\beta-3}{\beta-\alpha}$ and $B= \frac{3-\alpha}{\beta-\alpha}$. Then 
\begin{align*}C_{k,m}C_{k,n+1}-C_{k,n}C_{k,m+1}&= (A\alpha^m+B\beta^m)(A\alpha^{n+1}+B\beta^{n+1})\\
&-(A\alpha^n+B\beta^n)(A\alpha^{m+1}+B\beta^{m+1})\\&=AB(\alpha^m\beta^{n+1}+\alpha^{n+1}\beta^{m}-\alpha^n\beta^{m+1}-\alpha^{m+1}\beta^{n})\\
&= AB(\alpha\beta)^{n} (\alpha^{m-n}\beta+\alpha^{}\beta^{m-n}-\beta^{m-n+1}-\alpha^{m-n+1})\\
&= AB(\alpha\beta)^{n} (\alpha^{m-n}(\beta-\alpha)+\beta^{m-n}(\alpha-\beta))\\
&= AB(\beta-\alpha)(\alpha\beta)^{n} (\alpha^{m-n}-\beta^{m-n})\\
&= -(\beta-3)(3-\alpha)(\alpha\beta)^{n} \frac{\alpha^{m-n}-\beta^{m-n}}{\alpha-\beta}\\
&= -8(k-1)^{n+1}B_{k,m-n}
\end{align*}\end{proof}

\begin{thm}[Vajda's identity] Let  $n,m,i,j,k,\ell$ be positive integers with $m>n+\ell$. Then 
\begin{enumerate}
\item (Formulation 1) $B_{k,n+i}B_{k,n+j}-B_{k,n}B_{k,n+i+j}=(k-1)^nB_{k,i}B_{k,j}$
\item (Formulation 2) $B_{k,n+\ell}B_{k,m-\ell}-B_{k,n}B_{k,m}=(k-1)^{n}B_{k,m-n-\ell}B_{k,\ell}$
\end{enumerate}
\end{thm}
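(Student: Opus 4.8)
The plan is to prove Formulation 1 directly from the Binet formula of Theorem~\ref{thm:1}, and then obtain Formulation 2 as an immediate specialization rather than a separate computation. First I would substitute $B_{k,m}=\tfrac{\alpha^m-\beta^m}{\alpha-\beta}$ into both products on the left of Formulation 1, giving
\[
B_{k,n+i}B_{k,n+j}-B_{k,n}B_{k,n+i+j}
=\frac{(\alpha^{n+i}-\beta^{n+i})(\alpha^{n+j}-\beta^{n+j})-(\alpha^{n}-\beta^{n})(\alpha^{n+i+j}-\beta^{n+i+j})}{(\alpha-\beta)^2}.
\]
Expanding both numerators, the ``pure'' terms $\alpha^{2n+i+j}$ and $\beta^{2n+i+j}$ cancel, and every surviving monomial is divisible by $(\alpha\beta)^n$. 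Pulling that factor out, the bracket collapses to $\alpha^{i+j}+\beta^{i+j}-\alpha^i\beta^j-\alpha^j\beta^i$.

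The only step that needs a small observation is that this last expression factors, namely $\alpha^{i+j}+\beta^{i+j}-\alpha^i\beta^j-\alpha^j\beta^i=(\alpha^i-\beta^i)(\alpha^j-\beta^j)$, which is clear after grouping as $\alpha^i(\alpha^j-\beta^j)-\beta^i(\alpha^j-\beta^j)$. Feeding this back, together with the relation $\alpha\beta=k-1$ from the remark following Theorem~\ref{thm:1}, yields
\[
B_{k,n+i}B_{k,n+j}-B_{k,n}B_{k,n+i+j}=(\alpha\beta)^n\,\frac{\alpha^i-\beta^i}{\alpha-\beta}\cdot\frac{\alpha^j-\beta^j}{\alpha-\beta}=(k-1)^nB_{k,i}B_{k,j},
\]
which is Formulation 1.

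For Formulation 2 I would simply specialize Formulation 1 by taking $i=\ell$ and $j=m-n-\ell$; the hypothesis $m>n+\ell$ ensures $j\ge 1$, so all indices remain in range. Then $n+i=n+\ell$, $n+j=m-\ell$, and $n+i+j=m$, and Formulation 1 becomes $B_{k,n+\ell}B_{k,m-\ell}-B_{k,n}B_{k,m}=(k-1)^nB_{k,\ell}B_{k,m-n-\ell}$, which is exactly the claimed identity. I do not anticipate any serious obstacle: the two places that call for a little care are the telescoping cancellation of the leading powers in the expansion and the index bookkeeping in the specialization, while the rest is mechanical once the factorization $(\alpha^i-\beta^i)(\alpha^j-\beta^j)$ is in hand. (One could alternatively prove both formulations by a double induction on $i$ and $j$ via the recurrence of Definition~\ref{d2}, but the Binet route is appreciably shorter.)
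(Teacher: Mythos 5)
Your proof is correct and, for Formulation 1, follows essentially the same route as the paper: expand via the Binet formula, cancel the $\alpha^{2n+i+j}$ and $\beta^{2n+i+j}$ terms, pull out $(\alpha\beta)^n=(k-1)^n$, and use the factorization $\alpha^{i+j}+\beta^{i+j}-\alpha^i\beta^j-\alpha^j\beta^i=(\alpha^i-\beta^i)(\alpha^j-\beta^j)$ (which the paper uses implicitly). The one place you diverge is Formulation 2: the paper redoes the whole Binet computation, whereas you simply specialize Formulation 1 with $i=\ell$, $j=m-n-\ell$ (legitimate since $m>n+\ell$ keeps $j\ge 1$). This buys brevity and also sidesteps a small slip in the paper's second computation, whose last line reads $(1-k)^n$ although $(\alpha\beta)^n=(k-1)^n$, matching the stated identity and your version.
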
\begin{proof}We prove both identities using the Binet formula:\begin{enumerate}
\item \begin{align*}B_{k,n+i}B_{k,n+j}-B_{k,n}B_{k,n+i+j}&=\frac{(\alpha^{n+i}-\beta^{n+i})(\alpha^{n+j}-\beta^{n+j})}{(\alpha-\beta)^2}\\
&-\frac{(\alpha^{n}-\beta^{n})(\alpha^{n+i+j}-\beta^{n+i+j})}{(\alpha-\beta)^2}\\
&= \frac{\alpha^n\beta^{n+i+j}-\alpha^{n+i}\beta^{n+j}-\alpha^{n+j}\beta^{n+i}+\beta^n\alpha^{n+i+j}}{(\alpha-\beta)^2}\\
&= (\alpha\beta)^n\frac{\beta^{i+j}-\alpha^i\beta^j-\alpha^j\beta^i+\alpha^{i+j}}{(\alpha-\beta)^2}\\
&= (k-1)^nB_{k,i}B_{k,j}\\\end{align*}  
\item \begin{align*}B_{k,n+\ell}B_{k,m-\ell}-B_{k,n}B_{k,m}&=\frac{\alpha^n\beta^m+\alpha^m\beta^n-\alpha^{n+\ell}\beta^{m-\ell}-\alpha^{m-\ell}\beta^{n+\ell}}{(\alpha-\beta)^2}\\
&= \frac{\alpha^n\beta^m+\alpha^m\beta^n-\alpha^{n+\ell}\beta^{m-\ell}-\alpha^{m-\ell}\beta^{n+\ell}}{(\alpha-\beta)^2}\\
&=(\alpha\beta)^n\frac{\beta^{m-n}+\alpha^{m-n}-\alpha^{\ell}\beta^{m-n-\ell}-\alpha^{m-n-\ell}\beta^{\ell}}{(\alpha-\beta)^2}\\
&=(1-k)^n B_{k,m-n-\ell}B_{k,\ell}.
\end{align*}
\end{enumerate}
\end{proof}
\begin{thm}\label{thm:sum1}
Let $n \geq 0$ be an integer. The sums of first n-terms of generalized balancing  and balancing-Lucas numbers are\ given by \begin{enumerate}
\item $\sum_{i=0}^{n} B_{k,n}=\frac{-(2k+1)B_{k,n}+(k-1)B_{k,n-1}+1}{-2k}$ 
\item $      \sum_{i=2}^{n}C_{k,n}=\frac{-(2k+1)C_{k,n}+(k-1)C_{k,n-1}+4(1-k)}{-2k}$ 
\end{enumerate}
   
\end{thm}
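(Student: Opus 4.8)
The plan is to prove both summation formulas by the same mechanism: telescoping the recurrence $B_{k,i}=3kB_{k,i-1}+(1-k)B_{k,i-2}$ (and the identical one for $C_{k,i}$) rather than invoking the Binet formula, since the closed form we want is naturally expressed in terms of $B_{k,n},B_{k,n-1}$ and a constant. Write $S_n=\sum_{i=0}^n B_{k,i}$. From the recurrence, $B_{k,i+1}-3kB_{k,i}-(1-k)B_{k,i-1}=0$ for $i\ge 1$; summing this identity over $i=1,\dots,n$ and reindexing each of the three sums to run over a common range should produce a linear relation among $S_n$, $B_{k,n}$, $B_{k,n+1}$ (or equivalently $B_{k,n},B_{k,n-1}$ after one more application of the recurrence), and the boundary terms $B_{k,0}=0$, $B_{k,1}=1$. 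Solving that linear relation for $S_n$ gives the stated fraction with denominator $-2k$, the $-2k$ arising as the coefficient $1-3k-(1-k)=-2k$ of $S_n$.

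Concretely, for part (1) I would start from $\sum_{i=1}^{n}B_{k,i+1}=3k\sum_{i=1}^{n}B_{k,i}+(1-k)\sum_{i=1}^{n}B_{k,i-1}$, rewrite the left side as $S_{n+1}-B_{k,0}-B_{k,1}=S_{n+1}-1$, the first right-hand sum as $S_n-B_{k,0}=S_n$, and the second as $S_{n-1}$; then express $S_{n+1}=S_n+B_{k,n+1}$ and $S_{n-1}=S_n-B_{k,n}-B_{k,n-1}$, collect the $S_n$ terms to get $(1-3k-(1-k))S_n = -2k\,S_n$ on one side and everything else on the other, and finally eliminate $B_{k,n+1}$ using $B_{k,n+1}=3kB_{k,n}+(1-k)B_{k,n-1}$ so that only $B_{k,n}$ and $B_{k,n-1}$ survive. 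The arithmetic should collapse to $-2k\,S_n = -(2k+1)B_{k,n}+(k-1)B_{k,n-1}+1$, which is the claim. (One should also note $k\ge 1$ so $k=1$ would make the denominator $-2$, not zero — the formula is fine for all $k\ge1$; only $k=0$ is excluded, and $k\ge1$ by hypothesis.)

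Part (2) is the same computation with $C$ in place of $B$, except the boundary data are $C_{k,0}=1$, $C_{k,1}=3$, and the sum starts at $i=2$; so I would instead telescope $\sum_{i=2}^{n}\big(C_{k,i+1}-3kC_{k,i}-(1-k)C_{k,i-1}\big)=0$, keeping careful track that the shifted sums pick up $C_{k,1}$ and $C_{k,2}=8k+1$ as boundary contributions. The constant $4(1-k)$ in the numerator should emerge from the combination of these boundary terms after the dust settles; a useful sanity check is $n=2$, where the left side is just $C_{k,2}=8k+1$ and the right side should reduce to the same via $C_{k,1}=3$. The only mild obstacle is bookkeeping: getting every reindexing shift and every boundary term right so that the extraneous $B_{k,n+1}$ (resp. $C_{k,n+1}$) is correctly traded for lower-index terms and the promised closed form appears exactly; there is no conceptual difficulty, and induction on $n$ is available as an alternative if the direct telescoping proves error-prone.
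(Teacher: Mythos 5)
Your method is exactly the paper's: apply the recurrence across the sum, reindex the shifted sums back to the partial sum, and solve the resulting linear relation, the denominator $-2k$ coming from $1-3k-(1-k)$. However, as written your derivation of part (1) contains a slip that would derail the arithmetic: with $S_m=\sum_{i=0}^m B_{k,i}$ one has $S_{n-1}=S_n-B_{k,n}$, not $S_{n-1}=S_n-B_{k,n}-B_{k,n-1}$ (that is $S_{n-2}$). With the correct identity your computation does collapse to $-2kS_n=1-(2k+1)B_{k,n}+(k-1)B_{k,n-1}$ after trading $B_{k,n+1}$ for lower-index terms, so part (1) is fine once this is fixed; with the identity as you wrote it, the coefficient of $B_{k,n-1}$ comes out wrong.

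For part (2) there is a more substantive problem: the constant $4(1-k)$ will \emph{not} ``emerge after the dust settles.'' Carrying out your telescoping (or the paper's own manipulation) gives $\sum_{i=2}^{n}C_{k,i}=\frac{(5k+4)-(2k+1)C_{k,n}+(k-1)C_{k,n-1}}{-2k}$, and adding $C_{k,0}+C_{k,1}=4$ gives $\sum_{i=0}^{n}C_{k,i}=\frac{(4-3k)-(2k+1)C_{k,n}+(k-1)C_{k,n-1}}{-2k}$; neither matches the printed statement, whose index range and constant are inconsistent (the paper's proof also reaches $5k+4$ and then mis-simplifies $5k+4-8k$ to $4(1-k)$). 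Your own proposed sanity check at $n=2$ already detects this: the right side of the printed formula evaluates to $8k+\frac{11}{2}$, not $C_{k,2}=8k+1$. So deferring the boundary bookkeeping with the promise that the stated constant will appear is a step that would fail as stated; you need to either prove the corrected identity above or explicitly note the misprint in the target formula.
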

\begin{proof}
For the first sum, note
\begin{align*}
S&=\sum_{i=2}^n B_{k,i}\\
&= \sum_{i=2}^n (3kB_{k,i-1}+(1-k)B_{k,n-2})\\
&= 3k\sum_{i=2}^{n}B_{k,i-1}+(1-k)\sum_{i=2}^{n} B_{k,i-2}\\&= 3k\sum_{i=1}^{n-1}B_{k,i}+(1-k)\sum_{i=2}^{n-2}B_{k,i}\\&= 3k(B_{k,1}-B_{k,n}+S)+(1-k)(B_{k,0}+B_{k,1}-B_{k,n}-B_{k,n-1}+S)\\
-2kS&= 3k(1-B_{k,n})+(1-k)(1-B_{k,n}-B_{k,n-1})
\end{align*}Thus, $\sum_{i=2}^n B_{k,n}= \frac{(2k+1)-(2k+1)B_{k,n}+(k-1)B_{k,n-1}}{-2k}$. Also, \[\sum_{i=0}^nB_{k,n}=1+\frac{(2k+1)-(2k+1)B_{k,n}+(k-1)B_{k,n-1}}{-2k}=\frac{-(2k+1)B_{k,n}+(k-1)B_{k,n-1}+1}{-2k}\]
Next, for the first sum, note
\begin{align*}
S&=\sum_{i=2}^n C_{k,i}\\
&= \sum_{i=2}^n (3kC_{k,i-1}+(1-k)C_{k,n-2})\\
&= 3k\sum_{i=2}^{n}C_{k,i-1}+(1-k)\sum_{i=2}^{n} C_{k,i-2}\\&= 3k\sum_{i=1}^{n-1}C_{k,i}+(1-k)\sum_{i=2}^{n-2}C_{k,i}\\&= 3k(C_{k,1}-C_{k,n}+S)+(1-k)(C_{k,0}+C_{k,1}-C_{k,n}-C_{k,n-1}+S)\\
-2kS&= 3k(3-C_{k,n})+(1-k)(1+3-C_{k,n}-C_{k,n-1})
\end{align*}Thus, $\sum_{i=2}^n C_{k,n}= \frac{(4+5k)-(2k+1)C_{k,n}+(k-1)C_{k,n-1}}{-2k}$. Also, \[\sum_{i=0}^nC_{k,n}=1+3+\frac{(5k+4)-(2k+1)C_{k,n}+(k-1)C_{k,n-1}}{-2k}=\frac{-(2k+1)C_{k,n}+(k-1)C_{k,n-1}+4(1-k)}{-2k}\]
\end{proof}

\subsection{Matrix Representation} As we know most integer sequences have matrix representation, for example the balancing numbers have the matrix $Q=\begin{pmatrix}6 & -1 \\
1 & 0 \\
\end{pmatrix}$  and $Q^n=\begin{pmatrix}B_{n+1} & -B_n \\
B_n & -B_{n-1} \\
\end{pmatrix}$. Also, the matrix $S=\begin{pmatrix}3 & 8 \\
1 & 3 \\
\end{pmatrix}$ for which $S^n=\begin{pmatrix}C_n & 8B_n \\
B_n & C_n \\
\end{pmatrix}$. the last matrices were used to prove many properties for the balancing and balancing-Lucas numbers, see \cite{RA1}. 
\begin{thm}\label{thm:3} Let $A=\begin{bmatrix}3k & 1-k \\
1 & 0 \\
\end{bmatrix}$. Then  for $n\geq2$ we have \[A^n=\begin{bmatrix}B_{k,n+1} & (1-k)B_{k,n} \\
B_{k,n} & (1-k)B_{k,n-1} \\
\end{bmatrix}\]
\end{thm}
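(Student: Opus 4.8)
The plan is to prove the identity by induction on $n$, exactly mirroring the inductive proof of the Binet formula in Theorem~\ref{thm:1}, since the matrix $A$ is precisely the companion matrix of the defining recurrence $B_{k,n}=3kB_{k,n-1}+(1-k)B_{k,n-2}$.

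First I would establish the base case $n=2$ by direct computation:
\[
A^2=\begin{bmatrix}3k & 1-k \\ 1 & 0\end{bmatrix}^2=\begin{bmatrix}9k^2+(1-k) & 3k(1-k) \\ 3k & 1-k\end{bmatrix},
\]
and then check entry by entry against the claimed matrix with $n=2$: the $(1,1)$ entry is $9k^2-k+1=B_{k,3}$ (which follows from the recurrence applied to $B_{k,1}=1$ and $B_{k,2}=3k$), the $(1,2)$ entry is $(1-k)(3k)=(1-k)B_{k,2}$, the $(2,1)$ entry is $3k=B_{k,2}$, and the $(2,2)$ entry is $1-k=(1-k)B_{k,1}$. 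All four match.

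For the inductive step, assume the formula holds for some $n\geq 2$ and compute $A^{n+1}=A\cdot A^n$:
\[
A^{n+1}=\begin{bmatrix}3k & 1-k \\ 1 & 0\end{bmatrix}\begin{bmatrix}B_{k,n+1} & (1-k)B_{k,n} \\ B_{k,n} & (1-k)B_{k,n-1}\end{bmatrix}
=\begin{bmatrix}3kB_{k,n+1}+(1-k)B_{k,n} & (1-k)\bigl(3kB_{k,n}+(1-k)B_{k,n-1}\bigr) \\ B_{k,n+1} & (1-k)B_{k,n}\end{bmatrix}.
\]
Now the recurrence of Definition~\ref{d2} gives $3kB_{k,n+1}+(1-k)B_{k,n}=B_{k,n+2}$ for the $(1,1)$ entry and $3kB_{k,n}+(1-k)B_{k,n-1}=B_{k,n+1}$ inside the $(1,2)$ entry, so
\[
A^{n+1}=\begin{bmatrix}B_{k,n+2} & (1-k)B_{k,n+1} \\ B_{k,n+1} & (1-k)B_{k,n}\end{bmatrix},
\]
which is exactly the asserted form with $n$ replaced by $n+1$, completing the induction.

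There is no serious obstacle here; the computation is routine. The only point deserving a word of care is bookkeeping of indices — making sure the recurrence is invoked at the right shift in each matrix entry, and noting that the appearance of $B_{k,n-1}$ causes no trouble since $n\geq 2$ keeps all indices nonnegative (and, if one wishes, the identity even extends to $n=1$ and $n=0$ using the negative-index convention of Definition~\ref{def:3}). One could alternatively multiply in the other order, $A^{n+1}=A^n\cdot A$, and the same two applications of the recurrence finish the argument.
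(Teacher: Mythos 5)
Your proof is correct and follows essentially the same route as the paper: induction on $n$ with a direct base-case computation of $A^2$ and one application of the recurrence per entry in the inductive step (the paper multiplies as $A^{n+1}=A^nA$ rather than $A\cdot A^n$, which is immaterial). Your base case is in fact slightly more careful, since you record the $(1,1)$ entry of $A^2$ as $9k^2+(1-k)$, correctly matching $B_{k,3}=9k^2-k+1$.
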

\begin{proof}
We will prove this property by  mathematical induction.\\
For $m=2$,\begin{align*}
    A^2&=\begin{bmatrix}3k & 1-k \\
1 & 0 \\
\end{bmatrix}\begin{bmatrix}3k & 1-k \\
1 & 0 \\
\end{bmatrix}\\ &=\begin{bmatrix}9{k}^2 & 3k(1-k) \\
3k & 1-k \\\end{bmatrix}\\
&=\begin{bmatrix}B_{k,3} & (1-k)B_{k,2} \\
B_{k,2} & (1-k)B_{k,1} \\
\end{bmatrix}
\end{align*}
Now, assume that the property is true  $m=n$,
now consider  $m=n+1$,\begin{align*} A^{n+1}&=A^{n}A\\
&=\begin{bmatrix} B_{k,n+1} & (1-k)B_{k,n} \\ B_{k,n} & (1-k)B_{k,n-1}\\
\end{bmatrix}  \begin{bmatrix} 3k & 1-k \\ 1 & 0 \\ 
\end{bmatrix}\\ &=\begin{bmatrix} 3kB_{k,n+1}+(1-k)B_{k,n} & (1-k)B_{k,n+1} \\ 3kB_{k,n}+(1-k)B_{k,n-1} & (1-k)B_{k,n}\\ 
\end{bmatrix}\\ &=\begin{bmatrix} B_{k,n+2} & (1-k)B_{k,n+1} \\ B_{k,n+1} & (1-k)B_{k,n}\end{bmatrix}
\end{align*}as desired.
\end{proof}
\begin{cor} [Cassini Identity 1] For $n \geq2$, we have $B_{k,n}^2-B_{k,n-1}B_{k,n+1}=(k-1)^{n-1}$.
\end{cor}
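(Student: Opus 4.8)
The cleanest route is to recognize this as the $r=1$ specialization of Catalan's Identity~1 (Theorem~\ref{thm:5}). Substituting $r=1$ into $B_{k,n+r}B_{k,n-r}-B_{k,n}^2=-(k-1)^{n-r}(B_{k,r})^2$ and using $B_{k,1}=1$ immediately gives $B_{k,n+1}B_{k,n-1}-B_{k,n}^2=-(k-1)^{n-1}$, which rearranges to the claimed identity. So the first step I would take is simply to quote Theorem~\ref{thm:5} with $r=1$ and rewrite; no computation is needed beyond that.

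As a self-contained alternative that ties the corollary to the preceding subsection, I would instead take determinants in the matrix formula of Theorem~\ref{thm:3}. Since $\det A = 3k\cdot 0 - (1-k)\cdot 1 = k-1$, multiplicativity of the determinant gives $\det(A^n) = (k-1)^n$. On the other hand, expanding the displayed matrix $A^n$ directly yields $\det(A^n) = (1-k)B_{k,n+1}B_{k,n-1} - (1-k)B_{k,n}^2 = (1-k)\bigl(B_{k,n+1}B_{k,n-1}-B_{k,n}^2\bigr)$. Equating the two expressions and dividing by $1-k$ gives $B_{k,n+1}B_{k,n-1}-B_{k,n}^2 = \frac{(k-1)^n}{1-k} = -(k-1)^{n-1}$, i.e. $B_{k,n}^2-B_{k,n-1}B_{k,n+1}=(k-1)^{n-1}$.

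The only real subtlety — and the single point I would flag explicitly — is the degenerate case $k=1$, where $\det A = 0$ and the division by $1-k$ in the second argument is illegitimate. This case is handled in one line: by the Remark, $B_{1,m}=3^m$ for all relevant $m$, so $B_{1,n}^2 - B_{1,n-1}B_{1,n+1} = 3^{2n} - 3^{n-1}3^{n+1} = 0 = (1-1)^{n-1}$ for $n\ge 2$. With $k=1$ disposed of, the determinant argument (or the appeal to Theorem~\ref{thm:5}) covers every $k\ge 2$, completing the proof. I do not anticipate any genuine obstacle here; the statement is a direct corollary, and the write-up is essentially just a one- or two-line derivation plus the trivial boundary check.
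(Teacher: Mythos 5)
Your proposal is correct, and your second argument (taking determinants in Theorem~\ref{thm:3}, using $\det A = k-1$ and multiplicativity) is precisely the proof the paper gives, down to the final division by $1-k$. Your primary route --- quoting Catalan's Identity~1 with $r=1$ and $B_{k,1}=1$ --- is a genuinely different and even shorter derivation that the paper does not use; it has the added advantage that it needs no division by $1-k$, so the degenerate case $k=1$ is automatically covered, whereas the determinant argument (both yours and the paper's) silently divides by $1-k$. Your explicit one-line check of $k=1$ via the geometric behaviour of $B_{1,n}$ is a point of care the paper omits entirely, and it is sound: the three terms form a geometric progression, so $B_{1,n}^2-B_{1,n-1}B_{1,n+1}=0=(k-1)^{n-1}$ regardless of whether one writes $B_{1,n}=3^n$ (as the paper's remark claims) or the actually correct $B_{1,n}=3^{n-1}$. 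In short: same core mechanism available in your write-up, plus a cleaner alternative and a boundary case handled that the paper glosses over.
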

\begin{proof}
\begin{align*} \det A &= \begin{vmatrix} 3k & 1-k \\ 1 & 0 \end{vmatrix}\\ &=k-1
\end{align*}
Then ,
\begin{align*} \det A &= \begin{vmatrix} B_{k,n+1} & (1-k)B_{k,n} \\ B_{k,n} & (1-k)B_{k,n-1} \end{vmatrix}\\ (k-1)^n &=(1-k)B_{k,n-1} B_{k,n+1}-(1-k){B_{k,n}}^2\\
 (k-1)^{n-1} &={B_{k,n}}^{2}-B_{k,n-1} B_{k,n+1}
\end{align*}
\end{proof}
\begin{thm}\label{thm:4} Let $A=\begin{bmatrix}3k & 1-k \\
1 & 0 \\
\end{bmatrix}, R=\begin{bmatrix}3 & 1-k \\
1 & 3(1-k) \\
\end{bmatrix}$. Then for $n\geq2$ we have  \[R_n=RA^n=\begin{bmatrix}C_{k,n+1} & (1-k)C_{k,n} \\
C_{k,n} & (1-k)C_{k,n-1} \\
\end{bmatrix}\].
\end{thm}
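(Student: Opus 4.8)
The plan is to reduce everything to the matrix identity for $A^n$ already proved in Theorem~\ref{thm:3}, combined with one auxiliary scalar identity linking the two sequences, and then to perform the single multiplication $R\cdot A^n$. So the work splits into a lemma and a computation.

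First I would establish the auxiliary identity $C_{k,n}=3B_{k,n}+(1-k)B_{k,n-1}$, valid for all $n\ge 1$. This drops out of the Binet formulas of Theorem~\ref{thm:1} in one line: writing $1-k=-\alpha\beta$ (the relation $\alpha\beta=k-1$ from the remark following Theorem~\ref{thm:1}) turns $(1-k)\alpha^{n-1}$ into $-\alpha^{n}\beta$ and $(1-k)\beta^{n-1}$ into $-\alpha\beta^{n}$, so that
\[
3B_{k,n}+(1-k)B_{k,n-1}=\frac{3(\alpha^{n}-\beta^{n})+(1-k)(\alpha^{n-1}-\beta^{n-1})}{\alpha-\beta}=\frac{(3-\beta)\alpha^{n}+(\alpha-3)\beta^{n}}{\alpha-\beta}=\frac{\beta-3}{\beta-\alpha}\alpha^{n}+\frac{3-\alpha}{\beta-\alpha}\beta^{n}=C_{k,n}.
\]
Alternatively one can avoid Binet altogether: both sides satisfy $x_n=3kx_{n-1}+(1-k)x_{n-2}$ and agree at $n=1,2$ (a quick check against the table in the Example), so the identity follows by induction.

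Next I would simply multiply out, using Theorem~\ref{thm:3},
\[
RA^{n}=\begin{bmatrix}3 & 1-k\\ 1 & 3(1-k)\end{bmatrix}\begin{bmatrix}B_{k,n+1} & (1-k)B_{k,n}\\ B_{k,n} & (1-k)B_{k,n-1}\end{bmatrix},
\]
and read off the four entries. The $(1,1)$ entry is $3B_{k,n+1}+(1-k)B_{k,n}$, which is $C_{k,n+1}$ by the lemma at index $n+1$; the $(1,2)$ entry is $(1-k)\bigl(3B_{k,n}+(1-k)B_{k,n-1}\bigr)=(1-k)C_{k,n}$ by the lemma at index $n$; the $(2,1)$ entry is $B_{k,n+1}+3(1-k)B_{k,n}$, which after one application of $B_{k,n+1}=3kB_{k,n}+(1-k)B_{k,n-1}$ becomes $3B_{k,n}+(1-k)B_{k,n-1}=C_{k,n}$; and the $(2,2)$ entry is $(1-k)\bigl(B_{k,n}+3(1-k)B_{k,n-1}\bigr)$, which reduces the same way to $(1-k)C_{k,n-1}$. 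This last reduction needs the lemma at index $n-1\ge 1$, i.e. $n\ge 2$, which is exactly the hypothesis.

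The computation is entirely routine; the only point demanding care is the index bookkeeping, namely checking that the rewrites of the second-row entries land precisely on $C_{k,n}$ and $C_{k,n-1}$ (not on shifted indices) and that $n\ge 2$ is exactly what is needed for the auxiliary lemma at its smallest occurring index. If one prefers to bypass the lemma, the statement also admits a direct proof by induction on $n$ mirroring Theorem~\ref{thm:3}: verify the base case $n=2$ by computing $RA^{2}$ against the values $C_{k,1},C_{k,2},C_{k,3}$ from the table, and for the inductive step use $R_{n+1}=R_{n}A$ together with the defining recurrence $C_{k,n+1}=3kC_{k,n}+(1-k)C_{k,n-1}$ to advance the formula one step.
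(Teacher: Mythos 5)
Your proof is correct, but it takes a different route from the paper. The paper proves the theorem by induction on $n$: it verifies the base case by computing $RA^{2}$ explicitly and matching it against $C_{k,1},C_{k,2},C_{k,3}$, and in the inductive step writes $R_{n+1}=R_{n}A$ and advances the entries with the recurrence $C_{k,n+1}=3kC_{k,n}+(1-k)C_{k,n-1}$ — exactly the "alternative" you sketch in your last sentences. Your main argument instead isolates the scalar identity $C_{k,n}=3B_{k,n}+(1-k)B_{k,n-1}$ (proved from the Binet formulas via $\alpha\beta=k-1$, or by checking two initial values and invoking the common recurrence) and then performs the single multiplication $R\cdot A^{n}$ using Theorem~\ref{thm:3}, with the recurrence for $B_{k,n}$ handling the second-row entries; your index bookkeeping is right, and there is no circularity with Corollary~\ref{cor:5}, since your auxiliary identity is established independently (indeed the $(2,1)$ entry of your computation re-derives that corollary). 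What your approach buys is that it leans on the already-proved matrix formula for $A^{n}$ rather than running a second, parallel induction, and it makes the structural relation between the two sequences explicit; the paper's induction is more self-contained in that it never needs the Binet formula for $C_{k,n}$, only its defining recurrence and initial values.
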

\begin{proof} We prove this property by mathematical induction.
For $m=2$,\begin{align*} R_2&=RA^2 \\ &=\begin{bmatrix} 3 & (1-k) \\ 1 & 3(1-k)\\
\end{bmatrix} \begin{bmatrix} 9k^2+1-k & 3k(1-k) \\ 3k & 1-k \\
\end{bmatrix} \\ &=\begin{bmatrix} 24k^2+3 & (1-k)(8k+1) \\ 8k+1 & 3(1-k) \\ 
\end{bmatrix} \\ &=\begin{bmatrix} C_{k,3} & (1-k)C_{k,2} \\ C_{k,2} & (1-k)C_{k,1} \\\end{bmatrix}
\end{align*}\\
\text{ Assume that the property is  true for} $m=n$, 
now consider  $m=n+1$;
\begin{align*} RA^{n+1}&=R_{n}A \\
    &=\begin{bmatrix}C_{k,n+1} & (1-k)C_{k,n} \\
C_{k,n} & (1-k)C_{k,n-1} \\
\end{bmatrix}\begin{bmatrix}3k & 1-k \\ 1 & 0 \\ 
\end{bmatrix}\\
&=\begin{bmatrix} 3kC_{k,n+1}+(1-k)C_{k,n} & (1-k)C_{k,n+1} \\ 3kC_{k,n}+(1-k)C_{k,n-1} & (1-k)C_{k,n}\\
\end{bmatrix}\\
&=\begin{bmatrix} C_{k,n+2} & (1-k)C_{k,n+1} \\ C_{k,n+1} & (1-k)C_{k,n}\\
\end{bmatrix}
\end{align*}
\end{proof}

\begin{rem} $AR=RA$
\end{rem}
\begin{cor} [Cassini Identity 2] For $n \geq2$, we have \[C_{k,n}^2-C_{k,n-1}C_{k,n+1}=-8(k-1)^{n}\]
\end{cor}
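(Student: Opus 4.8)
The plan is to follow exactly the template used for Cassini Identity~1: take determinants in the matrix identity of Theorem~\ref{thm:4}. Since the determinant is multiplicative and $R_n = RA^n$ holds for $n\ge 2$, we get $\det R_n = (\det R)(\det A)^n$. From the proof of Cassini Identity~1 we already know $\det A = k-1$, so the only new computation needed is
\[
\det R = \begin{vmatrix} 3 & 1-k \\ 1 & 3(1-k) \end{vmatrix} = 9(1-k)-(1-k) = 8(1-k),
\]
and therefore $\det R_n = 8(1-k)(k-1)^n = -8(k-1)^{n+1}$.

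On the other hand, reading off the entries of $R_n$ from Theorem~\ref{thm:4} and expanding the $2\times 2$ determinant,
\[
\det R_n = C_{k,n+1}\cdot(1-k)C_{k,n-1} - (1-k)C_{k,n}\cdot C_{k,n} = (1-k)\bigl(C_{k,n-1}C_{k,n+1}-C_{k,n}^2\bigr).
\]
Equating the two expressions gives $(1-k)\bigl(C_{k,n-1}C_{k,n+1}-C_{k,n}^2\bigr) = 8(1-k)(k-1)^n$. For $k\ge 2$ we cancel the nonzero factor $1-k$ to obtain $C_{k,n-1}C_{k,n+1}-C_{k,n}^2 = 8(k-1)^n$, which is precisely the claim after transposing; for $k=1$ both sides of the claimed identity equal $0$ (since $C_{1,n}=3^n$), so the identity holds there as well.

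A shorter route, which I would mention as a remark, is to simply specialize Catalan's Identity~2 (Theorem~\ref{thm:55}) to $r=1$: it yields $C_{k,n+1}C_{k,n-1}-C_{k,n}^2 = 8(k-1)^{n}B_{k,1}^2 = 8(k-1)^n$ because $B_{k,1}=1$, and this rearranges at once to $C_{k,n}^2-C_{k,n-1}C_{k,n+1} = -8(k-1)^n$.

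There is no genuine obstacle in this argument; the only thing to be careful about is the sign bookkeeping between the two conjugate factors $1-k$ and $k-1$, and the degenerate value $k=1$, where the factor $1-k$ that gets pulled out of $\det R_n$ vanishes and a separate one-line check is required. Obtaining the identity from Catalan's Identity~2 avoids even that minor point.
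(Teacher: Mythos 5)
Your proof is correct and follows essentially the same route as the paper: take determinants in $R_n=RA^n$ from Theorem~\ref{thm:4}, use $\det R=8(1-k)$ and $\det A=k-1$, and cancel the factor $1-k$. Your explicit treatment of the degenerate case $k=1$ (which the paper's cancellation silently skips) and the alternative derivation from Catalan's Identity~2 with $r=1$ are harmless refinements, not a different method.
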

\begin{proof}
\begin{align*} \det R_{n}&=\det R \det A^n \\ &=8(1-k)(k-1)^n \\ &=-8(k-1)^{n+1} \\ &= \begin{vmatrix} C_{k,n+1} & (1-n)C_{k,n} \\ C_{k,n} & (1-k)C_{k,n-1} \end{vmatrix}\\
-8(k-1)^{n+1} &=(1-k)C_{k,n-1} C_{k,n+1} - (1-k)C_{{k,n}}^2 \\
-8(k-1)^{n} &={C_{k,n}}^2 - C_{k,n-1} C_{k,n+1} \end{align*}\\
\end{proof}
\begin{cor}\label{cor:5}$C_{k,n}=B_{n+1}+3(1-k)B_n$.
\end{cor}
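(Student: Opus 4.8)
The plan is to read the identity straight off the matrix factorization $R_n = RA^n$ from Theorem \ref{thm:4}, since the corollary is nothing more than one scalar entry of that matrix equation. (I read the $B_{n+1},B_n$ in the statement as $B_{k,n+1},B_{k,n}$.) First I would multiply out $RA^n$ using the explicit form of $A^n$ from Theorem \ref{thm:3}:
\[
RA^n=\begin{bmatrix} 3 & 1-k \\ 1 & 3(1-k) \end{bmatrix}\begin{bmatrix} B_{k,n+1} & (1-k)B_{k,n} \\ B_{k,n} & (1-k)B_{k,n-1} \end{bmatrix}.
\]
The $(2,1)$-entry of the right-hand side is $B_{k,n+1}+3(1-k)B_{k,n}$. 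On the other hand, Theorem \ref{thm:4} identifies $R_n=RA^n$ with the matrix whose $(2,1)$-entry is $C_{k,n}$. Equating the two gives $C_{k,n}=B_{k,n+1}+3(1-k)B_{k,n}$, as claimed.

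As an alternative (and a cross-check) one can argue directly from the Binet formulas of Theorem \ref{thm:1}. Putting the right-hand side over the common denominator $\alpha-\beta$ yields $\dfrac{\alpha^{n}(\alpha+3(1-k))-\beta^{n}(\beta+3(1-k))}{\alpha-\beta}$, and since $\alpha+\beta=3k$ one has $\alpha+3(1-k)=3-\beta$ and $\beta+3(1-k)=3-\alpha$. Hence the right-hand side collapses to $\dfrac{(3-\beta)\alpha^{n}-(3-\alpha)\beta^{n}}{\alpha-\beta}$, which, after rewriting the coefficients as $\tfrac{\beta-3}{\beta-\alpha}$ and $\tfrac{3-\alpha}{\beta-\alpha}$, is exactly the Binet expression for $C_{k,n}$.

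There is essentially no obstacle here: both routes are short and mechanical, and I would present the matrix one since the ingredients are already assembled just above the corollary. The only thing to watch is the sign bookkeeping — the factors $1-k$ versus $k-1$ and the orientation of the Binet coefficients — and, in the matrix argument, making sure one extracts the off-diagonal $(2,1)$-entry rather than the $(1,1)$-entry, since it is the former that produces the stated identity.
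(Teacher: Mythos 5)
Your main argument is exactly the paper's proof: write $R_n=RA^n$ using the explicit $A^n$ from Theorem \ref{thm:3}, identify it with the matrix in Theorem \ref{thm:4}, and read off the $(2,1)$-entry, so the proposal is correct and takes essentially the same route. The Binet-formula cross-check you add is also valid, but it is supplementary rather than a different proof strategy.
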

\begin{proof} Consider the matrix $R_n=RA^n=\begin{bmatrix}C_{k,n+1} & (1-k)C_{k,n} \\
C_{k,n} & (1-k)C_{k,n-1} \\
\end{bmatrix}$ from Theorem \ref{thm:4}, we have \\$R_n=R\begin{bmatrix}B_{k,n+1} & (1-k)B_{k,n} \\
B_{k,n} & (1-k)B_{k,n-1} \\
\end{bmatrix}$ from Theorem \ref{thm:3}. Thus, \[\begin{bmatrix}C_{k,n+1} & (1-k)C_{k,n} \\
C_{k,n} & (1-k)C_{k,n-1} \\
\end{bmatrix}=\begin{bmatrix}3 & 1-k \\
1 & 3(1-k) \\
\end{bmatrix}\begin{bmatrix}B_{k,n+1} & (1-k)B_{k,n} \\
B_{k,n} & (1-k)B_{k,n-1} \\
\end{bmatrix}\]. So by equating the corresponding entries we get the desired result.
\end{proof}
\begin{lem} \label{lem:1} For all integers  $n \geq2$ , we have \[\alpha^n+\beta^n=B_{k,n+1}-(k-1)B_{k,n-1}\]
\end{lem}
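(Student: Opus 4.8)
The plan is to derive the identity directly from the Binet formula of Theorem~\ref{thm:1} together with the relation $\alpha\beta=k-1$ recorded in the remark following it. Writing $B_{k,n+1}-(k-1)B_{k,n-1}$ with the Binet formula and replacing the coefficient $k-1$ by $\alpha\beta$, one obtains
\[
B_{k,n+1}-(k-1)B_{k,n-1}=\frac{\alpha^{n+1}-\beta^{n+1}-\alpha\beta(\alpha^{n-1}-\beta^{n-1})}{\alpha-\beta}=\frac{\alpha^{n}(\alpha-\beta)+\beta^{n}(\alpha-\beta)}{\alpha-\beta},
\]
using $\alpha^{n+1}-\alpha^{n}\beta=\alpha^{n}(\alpha-\beta)$ and $-\beta^{n+1}+\alpha\beta^{n}=\beta^{n}(\alpha-\beta)$. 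Cancelling $\alpha-\beta\neq 0$ leaves $\alpha^{n}+\beta^{n}$, which is exactly the claim. No induction or case analysis is needed; the whole proof is this one substitution.

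A second, essentially free, route goes through the matrix $A=\begin{bmatrix}3k&1-k\\1&0\end{bmatrix}$ of Theorem~\ref{thm:3}. Its characteristic polynomial is $\lambda^{2}-3k\lambda+(k-1)$, whose roots are precisely $\alpha$ and $\beta$; hence $A^{n}$ has eigenvalues $\alpha^{n},\beta^{n}$, so $\operatorname{tr}(A^{n})=\alpha^{n}+\beta^{n}$. On the other hand, reading off the diagonal entries of the matrix identity in Theorem~\ref{thm:3} gives $\operatorname{tr}(A^{n})=B_{k,n+1}+(1-k)B_{k,n-1}=B_{k,n+1}-(k-1)B_{k,n-1}$, and equating the two expressions for the trace finishes the proof. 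If one wishes to avoid the spectral language, the same thing works by noting that $\alpha$ and $\beta$ both satisfy $x^{2}=3kx+(1-k)$, so the sequence $n\mapsto\alpha^{n}+\beta^{n}$ obeys the same recurrence as $B_{k,\cdot}$, after which one only matches the values at $n=2$ and $n=3$.

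I would present the Binet computation as the main proof, since it is short and self-contained, and perhaps add the trace argument as a remark. I do not anticipate any real obstacle here; the only thing to watch is not dropping a sign when collecting the $\alpha^{n}$ and $\beta^{n}$ terms in the numerator.
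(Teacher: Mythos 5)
Your main argument is correct and is essentially the paper's own proof: both substitute the Binet formula and use $\alpha\beta=k-1$ to simplify $B_{k,n+1}-(k-1)B_{k,n-1}$ to $\alpha^n+\beta^n$, with your direct factoring of the numerator being just a tidier way of organizing the same cancellation the paper carries out via the expansions $\alpha^n+\alpha^{n-1}\beta+\cdots+\beta^n$. The alternative trace argument via Theorem~\ref{thm:3} is a pleasant extra but not needed.
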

\begin{proof}
By Bient formula : \[B_{k,n+1}=\frac{\alpha^{n+1}-\beta^{n+1}}{\alpha-\beta}=\alpha^n +\alpha^{n-1} \beta +\alpha^{n-2} \beta^2 +.......+\alpha \beta^{n-1}+\beta^n\]
Also, \begin{align*}
  (k-1)B_{k,n-1}&=(k-1)(\frac{\alpha^{n-1}-\beta^{n-1}}{\alpha-\beta})\\ &=(k-1)(\alpha^{n-2}+\alpha^{n-3}\beta+......+\alpha \beta^{n-3}+\beta^{n-2})\\ &= \alpha^{n-1} \beta+\alpha^{n-2}\beta^2+......+\alpha^2 \beta^{n-2}+\alpha \beta^{n-1}
\end{align*}
Then we have \begin{align*}
    B_{k,n+1}-(k-1)B_{k,n-1} &=( \alpha^n +\alpha^{n-1}\beta +\cdots+\beta^n )- (\alpha^{n-1}\beta +\alpha^{n-2}\beta^2+\cdots+\alpha \beta^{n-1})\\ &=\alpha^n + \beta^n .
\end{align*}
\end{proof}
\begin{lem}\label{lem:2} For any two positive integers $m$ and $n$ with $n \geq2$, we have \[B_{k,m+n}=B_{k,m}B_{k,n+1}+(1-k)B_{k,m-1}B_{k,n}\]
\end{lem}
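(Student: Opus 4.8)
The plan is to prove the addition formula $B_{k,m+n}=B_{k,m}B_{k,n+1}+(1-k)B_{k,m-1}B_{k,n}$ directly from the Binet formula of Theorem~\ref{thm:1}, using the identity $\alpha\beta=k-1$ recorded in the remark after that theorem. First I would substitute $B_{k,j}=\frac{\alpha^j-\beta^j}{\alpha-\beta}$ for each of the four factors on the right-hand side, so that the expression becomes a single fraction with denominator $(\alpha-\beta)^2$. Expanding the two products in the numerator gives
\[
(\alpha^m-\beta^m)(\alpha^{n+1}-\beta^{n+1})+(1-k)(\alpha^{m-1}-\beta^{m-1})(\alpha^n-\beta^n),
\]
and the goal is to show this collapses to $(\alpha-\beta)(\alpha^{m+n}-\beta^{m+n})$, which would immediately give $B_{k,m+n}$.

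The key computational step is to group the cross terms. Expanding the first product yields $\alpha^{m+n+1}+\beta^{m+n+1}-\alpha^m\beta^{n+1}-\alpha^{n+1}\beta^m$, and expanding the second (before multiplying by $1-k$) yields $\alpha^{m+n-1}+\beta^{m+n-1}-\alpha^{m-1}\beta^n-\alpha^n\beta^{m-1}$. I would then use $\alpha\beta=k-1$, equivalently $(1-k)=-\alpha\beta$, to rewrite the second group: $(1-k)\alpha^{m+n-1}=-\alpha\beta\cdot\alpha^{m+n-1}=-\alpha^{m+n}\beta$, and similarly $(1-k)\beta^{m+n-1}=-\alpha\beta^{m+n}$, while $(1-k)\alpha^{m-1}\beta^n=-\alpha^m\beta^{n+1}\cdot(\alpha\beta)^{-1}\cdot\alpha\beta$... more cleanly, $(1-k)\alpha^{m-1}\beta^n = -\alpha\beta\cdot\alpha^{m-1}\beta^n = -\alpha^m\beta^{n+1}$ and $(1-k)\alpha^n\beta^{m-1}=-\alpha^{n+1}\beta^m$. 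Adding the two groups, the terms $-\alpha^m\beta^{n+1}$, $-\alpha^{n+1}\beta^m$ from the first product cancel against $+\alpha^m\beta^{n+1}$, $+\alpha^{n+1}\beta^m$ coming from the second (since the second contributes $-(-\alpha^m\beta^{n+1})=+\alpha^m\beta^{n+1}$ after the sign flip), leaving $\alpha^{m+n+1}+\beta^{m+n+1}-\alpha^{m+n}\beta-\alpha\beta^{m+n}=(\alpha-\beta)(\alpha^{m+n}-\beta^{m+n})$. Dividing by $(\alpha-\beta)^2$ finishes the proof.

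The main obstacle is purely bookkeeping: keeping the signs straight when substituting $1-k=-\alpha\beta$ and making sure the cross terms cancel rather than add. An alternative, perhaps cleaner, route would be matrix-theoretic: from Theorem~\ref{thm:3} we have $A^{m+n}=A^m A^n$, and reading off the $(2,1)$ entry of both sides gives $B_{k,m+n}=B_{k,m+1}B_{k,n}+(1-k)B_{k,m}B_{k,n-1}$; reindexing (or reading off the $(1,2)$ entry, or using $A^{m+n}=A^{m-1}A^{n+1}$) produces the stated form. I would likely present the Binet-formula computation as the main proof since it parallels the style of the preceding identities, and I expect no genuine difficulty beyond the algebra, only the need to be careful with the exponent shifts $m-1$, $n+1$ and the factor $1-k$. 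I would also note in passing that the hypothesis $n\ge 2$ is only needed so that all indices stay in the range where Theorem~\ref{thm:1} was stated.
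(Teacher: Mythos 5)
Your Binet-formula computation is correct: substituting $B_{k,j}=\frac{\alpha^j-\beta^j}{\alpha-\beta}$ and using $1-k=-\alpha\beta$, the cross terms $\alpha^m\beta^{n+1}$ and $\alpha^{n+1}\beta^m$ cancel and the numerator reduces to $(\alpha-\beta)(\alpha^{m+n}-\beta^{m+n})$, which gives the identity after dividing by $(\alpha-\beta)^2$. However, this is a genuinely different route from the paper, which proves the lemma exactly the way you sketch as your "alternative": it invokes Theorem~\ref{thm:3}, writes $A^{m+n}=A^{m}A^{n}$, and equates the $(2,1)$ entries. One small correction to your sketch of that route: no reindexing is needed, since the $(2,1)$ entry of $A^{m}A^{n}$ is the product of the second row $\bigl(B_{k,m},\,(1-k)B_{k,m-1}\bigr)$ of $A^{m}$ with the first column $\bigl(B_{k,n+1},\,B_{k,n}\bigr)^{T}$ of $A^{n}$, which is literally $B_{k,m}B_{k,n+1}+(1-k)B_{k,m-1}B_{k,n}$; the form you wrote down is the $(2,1)$ entry of $A^{n}A^{m}$, the same identity with $m$ and $n$ exchanged. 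As for what each approach buys: the matrix argument is a two-line consequence of the already-established powers of $A$ and avoids all sign bookkeeping, while your Binet computation is self-contained given Theorem~\ref{thm:1} and the remark $\alpha\beta=k-1$, matches the style of the Catalan, d'Ocagne and Vajda proofs, and does not depend on the inductive matrix theorem; either is acceptable, and your worry about $n\ge 2$ is indeed only a matter of staying within the stated range of the Binet formula.
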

\begin{proof}
\begin{align*}A^{n+m} =A^nA^m&=\begin{bmatrix}B_{k,n+m+1} & (1-k)B_{k,n+m} \\
B_{k,n+m} & (1-k)B_{k,n+m-1}
\end{bmatrix} \\ &={\begin{bmatrix}B_{k,m+1} & (1-k)B_{k,m}\\
B_{k,m} & (1-k)B_{k,m-1} \\
\end{bmatrix}}{\begin{bmatrix}B_{k,n+1} & (1-k)B_{k,n}\\
B_{k,n} & (1-k)B_{k,n-1} \\
\end{bmatrix}}\\ &=\begin{bmatrix} *&* \\
B_{k,m}B_{k,n+1}+(1-k)B_{k,m-1}B_{k,n} &*  \\
\end{bmatrix}
\end{align*}
Thus $B_{k,m+n}=B_{k,m}B_{k,n+1}+(1-k)B_{k,m-1}B_{k,n}$ as desired.
\end{proof}
\begin{cor}Let $n\geq 1$ be a natural integer. Then \begin{enumerate}
\item $B_{k,2n}=B_{k,n}(B_{k,n+1}+(1-k)B_{k,n-1})$
\item$B_{k,2n-1}=B^2_{k,n}+(1-k)B^2_{k,n-1}$
\end{enumerate}
\end{cor}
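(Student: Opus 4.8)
The plan is to read off both identities directly from the addition formula in Lemma~\ref{lem:2}, namely $B_{k,m+n}=B_{k,m}B_{k,n+1}+(1-k)B_{k,m-1}B_{k,n}$, by choosing the two free indices so that their sum is $2n$ in the first case and $2n-1$ in the second.

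For part~(1) I would set $m=n$ in Lemma~\ref{lem:2}, which gives $B_{k,2n}=B_{k,n}B_{k,n+1}+(1-k)B_{k,n-1}B_{k,n}$, and then factor out the common $B_{k,n}$ to obtain $B_{k,2n}=B_{k,n}\bigl(B_{k,n+1}+(1-k)B_{k,n-1}\bigr)$. (A pleasant alternative is to combine the Binet formula of Theorem~\ref{thm:1} with Lemma~\ref{lem:1}: since Lemma~\ref{lem:1} says $B_{k,n+1}+(1-k)B_{k,n-1}=\alpha^n+\beta^n$, multiplying by $B_{k,n}=(\alpha^n-\beta^n)/(\alpha-\beta)$ yields $(\alpha^{2n}-\beta^{2n})/(\alpha-\beta)=B_{k,2n}$.) For part~(2) I would apply Lemma~\ref{lem:2} with its indices $m$ and $n$ replaced by $n$ and $n-1$, so that the sum of indices is $2n-1$; this produces $B_{k,2n-1}=B_{k,n}B_{k,(n-1)+1}+(1-k)B_{k,n-1}B_{k,n-1}=B_{k,n}^2+(1-k)B_{k,n-1}^2$, which is exactly the claim.

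The only point requiring a little care is the index range: Lemma~\ref{lem:2} carries the hypothesis $n\ge 2$ on its second variable, whereas the corollary asserts $n\ge 1$. So I would clear the leftover boundary cases — $n=1$ for part~(1), and $n=1,2$ for part~(2) — by substituting straight into Definition~\ref{d2} and comparing against the tabulated values in the Example. More cleanly still, one can note that every manipulation in the proof of Lemma~\ref{lem:2} (and in the Binet formula) is a purely formal identity in $\alpha$ and $\beta$, so both formulas in fact hold for all $n\ge 1$ — indeed for all integers $n$ once Definition~\ref{def:3} is used — and a single $\alpha,\beta$ computation covers the small cases uniformly. I do not expect any genuine obstacle: all the substance already resides in Lemma~\ref{lem:2}, and what remains is bookkeeping of indices.
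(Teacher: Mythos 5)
Your proposal is correct and follows essentially the same route as the paper: both parts are obtained by specializing Lemma~\ref{lem:2} with $m=n$ (second index $n$ for part 1, $n-1$ for part 2) and factoring. Your extra attention to the small-index boundary cases and the alternative check via the Binet formula and Lemma~\ref{lem:1} go slightly beyond what the paper records, but they do not change the argument.
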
\begin{proof}
By Lemma \ref{lem:2}\begin{enumerate}
\item $B_{k,2n}=B_{k,n}B_{k,n+1}+(1-k)B_{k,n-1}B_{k,n}=B_{k,n}(B_{k,n+1}+(1-k)B_{k,n-1})$
\item $B_{k,2n-1}=B_{k,n}B_{k,n}+(1-k)B_{k,n-1}B_{k,n-1}=B^2_{k,n}+(1-k)B^2_{k,n-1}$
 
\end{enumerate}\end{proof}
\subsection{Divisibility Properties}

\begin{lem}\label{lem:3} Let $m,n\geq 2$ be two integers. If $m|n$, then $B_{k,m}|B_{k,n}$.
\end{lem}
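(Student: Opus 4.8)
The plan is to prove the divisibility statement $m \mid n \implies B_{k,m}\mid B_{k,n}$ by induction on the quotient $q$, where $n = qm$, using the addition formula established in Lemma~\ref{lem:2}. The base case $q=1$ is trivial since then $n=m$ and $B_{k,m}\mid B_{k,m}$. For the inductive step, I would write $n = qm = (q-1)m + m$ and apply Lemma~\ref{lem:2} with the roles $B_{k,(q-1)m+m} = B_{k,(q-1)m}B_{k,m+1} + (1-k)B_{k,(q-1)m-1}B_{k,m}$. The second summand is visibly divisible by $B_{k,m}$, and the first summand is divisible by $B_{k,m}$ by the induction hypothesis applied to $(q-1)m$ (which is still a multiple of $m$); hence the whole right-hand side, and therefore $B_{k,n}$, is divisible by $B_{k,m}$.

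The main technical nuisance is bookkeeping around the index constraints: Lemma~\ref{lem:2} is stated for $m,n$ positive with $n\geq 2$, and the Binet/matrix machinery in the excerpt is consistently stated for indices $\geq 2$, so I would either dispatch the small cases ($m=2$, small $q$) by hand from the table in the Example, or simply note that $B_{k,0}=0$ and $B_{k,1}=1$ make the addition formula valid in the edge cases as well (one can check $B_{k,m+1} = B_{k,m}B_{k,2} + (1-k)B_{k,m-1}B_{k,1}$ reduces to the defining recurrence). I would state explicitly that for $q\geq 2$ the quantity $(q-1)m \geq m \geq 2$, so the induction hypothesis applies cleanly, and $(q-1)m - 1 \geq 1$, so every term appearing is a legitimately indexed generalized balancing number.

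Concretely, the write-up would read: assume $n = qm$ with $q\geq 1$; induct on $q$; base case clear; for $q \geq 2$ invoke Lemma~\ref{lem:2} in the form
\[
B_{k,n} = B_{k,(q-1)m + m} = B_{k,(q-1)m}\,B_{k,m+1} + (1-k)\,B_{k,(q-1)m-1}\,B_{k,m},
\]
observe $B_{k,m}\mid B_{k,(q-1)m}$ by the inductive hypothesis (since $m\mid (q-1)m$) and $B_{k,m}\mid (1-k)B_{k,(q-1)m-1}B_{k,m}$ trivially, conclude $B_{k,m}\mid B_{k,n}$. I do not anticipate a genuine obstacle here — the only thing to be careful about is phrasing the induction on the multiplier rather than on $n$ itself, and making sure Lemma~\ref{lem:2} is actually the addition formula I need (it is, with $m$ replaced by $(q-1)m$ and $n$ by $m$). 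If one wanted to avoid even the edge-case discussion, an alternative is a direct Binet-formula argument: $B_{k,qm}/B_{k,m} = \frac{\alpha^{qm}-\beta^{qm}}{\alpha^m - \beta^m}$, and since $\alpha^m\beta^m = (k-1)^m \in \mathbb{Z}$ and $\alpha^m+\beta^m \in \mathbb{Z}$ (by Lemma~\ref{lem:1}), this ratio is a polynomial with integer coefficients in $\alpha^m+\beta^m$ and $\alpha^m\beta^m$ (it equals $\sum_{j=0}^{q-1}(\alpha^m)^{j}(\beta^m)^{q-1-j}$, a symmetric function), hence an integer; but the inductive proof via Lemma~\ref{lem:2} is cleaner and I would present that one.
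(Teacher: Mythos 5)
Your proposal is correct and takes essentially the same route as the paper: an induction on the quotient $q$ (the paper calls it $t$), with the inductive step handled by the addition formula of Lemma~\ref{lem:2} applied to the decomposition $n=(q-1)m+m$. The only difference is cosmetic --- the paper applies Lemma~\ref{lem:2} with the roles of the two indices swapped, so that $B_{k,m}$ appears explicitly in the first summand and the induction hypothesis is invoked on $B_{k,mt}$ in the second, which is the mirror image of your splitting.
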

\begin{proof}If $m|n$, then $n=mt$ for some integer $t$. We can prove the property by induction on $t$.\begin{enumerate}
\item If $t=1$, then it is trivial that $B_{k,m}|B_{k,m}.$
\item Assume that $B_{k,m}|B_{k,mj}$ for all $j =1,2,\cdots,t.$
\item Consider $B_{k,m(t+1)}=B_{k,mt+m}=B_{k,mt+1}B_{k,m}+(1-k)B_{k,mt}B_{k,m-1},$ by Lemma \ref{lem:2}. Now, from induction step 2 we may write $B_{k,mt}=B_{k,m}T$  for some integer $T.$
Thus  $B_{k,m(t+1)}=B_{k,m}(B_{k,mt+1}+(1-k)B_{k,m-1}T)$ which leads to $B_{k,m}|B_{k,m(t+1)}$.
\end{enumerate}
\end{proof}
\begin{lem}\label{lem:4} If $k \not\equiv 1 \mod 3$, then $\gcd(1-k,B_{k,n})=1,$ for $n \geq 1.$
\end{lem}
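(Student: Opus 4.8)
The plan is to reduce everything modulo $1-k$ and exploit that the second-order term of the recurrence vanishes there. First I would observe that, working modulo $1-k$ (equivalently modulo $k-1$, since the gcd ignores signs), the defining recurrence $B_{k,n}=3kB_{k,n-1}+(1-k)B_{k,n-2}$ collapses to $B_{k,n}\equiv 3k\,B_{k,n-1}\pmod{1-k}$ for $n\geq 2$. A short induction on $n$, with base case $B_{k,1}=1$ (where the recurrence does not apply), then gives
\[
B_{k,n}\equiv (3k)^{n-1}\pmod{1-k}\qquad\text{for every }n\geq 1 .
\]

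Consequently $\gcd(1-k,B_{k,n})=\gcd\bigl(1-k,(3k)^{n-1}\bigr)$, and since a prime dividing a power $(3k)^{n-1}$ must divide $3k$ itself, it suffices to prove $\gcd(1-k,3k)=1$. Here I would split this gcd using $\gcd(1-k,k)=1$ — true because $1-k$ and $k$ differ by $1$, so any common divisor divides their sum — which yields $\gcd(1-k,3k)=\gcd(1-k,3)$. Finally, $\gcd(1-k,3)=1$ is precisely the hypothesis, since $3\nmid 1-k$ is equivalent to $k\not\equiv 1\pmod 3$. Chaining the three equalities gives $\gcd(1-k,B_{k,n})=1$ for all $n\geq 1$.

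I do not anticipate a genuine obstacle: the argument is essentially a one-step reduction plus an elementary gcd manipulation. The only points that need care are the bookkeeping around $n=1$ (use $B_{k,1}=1$ directly rather than the recurrence) and the convention that $\gcd$ is taken with absolute values, which is what makes the statement meaningful and correct for $k\geq 2$, where $1-k$ is negative. It is also worth remarking that the congruence hypothesis cannot be dropped: when $k\equiv 1\pmod 3$, for instance $k=4$, one has $3\mid 1-k$ and $B_{k,n}\equiv(3k)^{n-1}\equiv 0\pmod 3$, so $3\mid\gcd(1-k,B_{k,n})$, and the conclusion fails.
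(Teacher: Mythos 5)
Your proof is correct. It differs from the paper's argument mainly in where the induction is placed: the paper inducts directly on the gcd statement, letting $d=\gcd(1-k,B_{k,n})$, noting $d$ divides $B_{k,n}-(1-k)B_{k,n-2}=3kB_{k,n-1}$, and then using $d\mid(1-k)$ together with the hypothesis $k\not\equiv 1\pmod 3$ to see $d$ is coprime to $3k$, so $d\mid B_{k,n-1}$ and the inductive hypothesis $\gcd(1-k,B_{k,n-1})=1$ forces $d=1$. You instead prove the stronger, explicit congruence $B_{k,n}\equiv(3k)^{n-1}\pmod{1-k}$ and then reduce everything to the single computation $\gcd(1-k,3k)=\gcd(1-k,3)=1$. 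The two proofs rest on the same two facts (the recurrence collapses modulo $1-k$, and $1-k$ is coprime to $3k$), but your packaging is cleaner: the closed-form residue removes the need for the inductive coprimality hypothesis, makes the step from ``$d\mid 3kB_{k,n-1}$'' transparent (the paper words this a bit loosely as ``$d\neq 3$ and $d\nmid k$'', when what is really used is $\gcd(d,3k)=1$), and as a bonus shows the hypothesis is sharp, as your $k=4$ example illustrates.
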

\begin{proof} We can prove this property by  induction on $n$.\begin{enumerate}
\item Since $B_{k,1}=1$ we have $\gcd(1-k,B_{k,1})=1$.
\item Assume that $\gcd(1-k,B_{k,m})=1$ for all integers $m=1,\cdots,n-1.$
\item Let $d=\gcd(1-k,B_{k,n})$. Then $d$ divides both integer and any linear combination of them, that is $d| (B_{k,n}- (1-k)B_{k,n-2})=3kB_{k,n-1}$. Now, since $d|(1-k)$ we know that $d\neq3$ and $d\nmid k$. Thus $d|B_{k,n-1}$, so we have $d$ divides both $1-k$ and $B_{k,n-1}$ but from induction step 2 we know that $\gcd(1-k, B_{k,n-1})=1$. Hence $d=1.$ 
\end{enumerate}
\end{proof}
We need the next remark in the proof of Theorem \ref{thm:6}. In fact it is exercise 15 in section 3.3 in \cite{RO}.
\begin{rem}\label{rem:3}For any three integers $a,b$ and $c$ we have \[\gcd(a,bc)=\gcd(a,b)\gcd(a,c)\]
\end{rem}
\begin{thm}\label{thm:6} Assume that  $n\geq 1$ and $k \geq 1$ are two integers such that   $k \not\equiv\ 1 \mod 3$.  Then $\gcd
 (B_{k,n},B_{k,n+1})=1$.
\end{thm}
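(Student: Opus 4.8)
The plan is to read off the result from the Cassini identity (Cassini Identity~1) together with Lemma~\ref{lem:4}. First I would dispose of the base value $n=1$ directly: since $B_{k,1}=1$, we have $\gcd(B_{k,1},B_{k,2})=\gcd(1,3k)=1$, and this holds for every $k$, so no hypothesis on $k$ is needed there. For $n\geq 2$ set $d=\gcd(B_{k,n},B_{k,n+1})$.

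Now invoke Cassini Identity~1, which is valid for $n\geq 2$: $B_{k,n}^{2}-B_{k,n-1}B_{k,n+1}=(k-1)^{n-1}$. Since $d$ divides both $B_{k,n}$ and $B_{k,n+1}$, it divides the left-hand side, hence $d\mid (k-1)^{n-1}$. On the other hand, because $k\not\equiv 1\pmod 3$, Lemma~\ref{lem:4} gives $\gcd(1-k,B_{k,n})=1$; applying Remark~\ref{rem:3} repeatedly (a trivial induction on the exponent) upgrades this to $\gcd\big((1-k)^{n-1},B_{k,n}\big)=1$, i.e. $\gcd\big((k-1)^{n-1},B_{k,n}\big)=1$ since the two quantities differ only by the sign $(-1)^{n-1}$. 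But $d$ divides $B_{k,n}$ as well as $(k-1)^{n-1}$, so $d$ divides $\gcd\big((k-1)^{n-1},B_{k,n}\big)=1$, whence $d=1$, as required.

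There is essentially no serious obstacle here; the only point needing a word of justification is the passage from $\gcd(1-k,B_{k,n})=1$ to $\gcd\big((1-k)^{n-1},B_{k,n}\big)=1$, which is exactly the use of Remark~\ref{rem:3} (or the standard fact that coprimality is inherited by powers). As a self-contained alternative that avoids Cassini altogether, one can argue by induction on $n$ in the style of the proof of Lemma~\ref{lem:4}: with $d=\gcd(B_{k,n},B_{k,n+1})$, the recurrence gives $d\mid\big(B_{k,n+1}-3kB_{k,n}\big)=(1-k)B_{k,n-1}$; since $d\mid B_{k,n}$ and $\gcd(1-k,B_{k,n})=1$ we get $\gcd(d,1-k)=1$, forcing $d\mid B_{k,n-1}$, and hence $d\mid\gcd(B_{k,n-1},B_{k,n})$, which equals $1$ by the inductive hypothesis (with base case $\gcd(B_{k,1},B_{k,2})=1$ as above). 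I would present the Cassini-based version, as it is the shortest to write down.
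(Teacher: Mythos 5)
Your argument is correct, but it takes a different route from the paper. The paper proves the statement by induction on $n$: with $d=\gcd(B_{k,n+1},B_{k,n+2})$, it uses the recurrence to get $d\mid\bigl(B_{k,n+2}-3kB_{k,n+1}\bigr)=(1-k)B_{k,n}$, then splits $\gcd\bigl(B_{k,n+1},(1-k)B_{k,n}\bigr)$ via Remark~\ref{rem:3}, Lemma~\ref{lem:4} and the inductive hypothesis — this is exactly the ``self-contained alternative'' you sketch at the end. Your primary proof instead reads the result off Cassini Identity~1 (itself a consequence of the matrix representation in Theorem~\ref{thm:3}): any common divisor of $B_{k,n}$ and $B_{k,n+1}$ divides $(k-1)^{n-1}$, and Lemma~\ref{lem:4} kills it. That version is shorter, avoids the induction on $n$ entirely, and makes transparent where the hypothesis $k\not\equiv 1\pmod 3$ enters (both through Lemma~\ref{lem:4} and by excluding $k=1$, where $(k-1)^{n-1}=0$ and the divisibility would say nothing); the paper's inductive version needs no matrix/Cassini machinery and runs parallel to the proof of Lemma~\ref{lem:4}. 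One small caution: Remark~\ref{rem:3} as stated is not true for arbitrary $a,b,c$ (take $a=b=c=2$); your repeated application is legitimate only because one of the two gcd factors is $1$ at each step, i.e.\ you are really using the standard fact that $\gcd(a,b)=1$ implies $\gcd(a^j,b)=1$, which is exactly how the paper uses the remark as well — it would be worth saying this explicitly rather than citing the remark in its full (false) generality.
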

\begin{proof}
We will prove the theorem by mathematical induction.\begin{enumerate}
\item When $n=1, \gcd(B_{k,1},B_{k,2})=\gcd(1,B_{k,2})=1$.
\item Assume that $\gcd(B_{k,m}, B_{k,m+1})=1$ for all $m=1,2,\cdots, n.$
\item Now, let  $d=\gcd(B_{k,n+1},B_{k,n+2})$. We have $d|B_{k,n+1}$ and $d|B_{k,n+2}$ so it divides any linear combination of them that is \[d|(B_{k,n+2}-3kB_{k,n+1})=(1-k)B_{k,n}\] Thus,  $d|(\gcd(B_{k,n+1},(1-k)B_{k,n}))$. Now, from Remark \ref{rem:3} we have \[\gcd(B_{k,n+1},(1-k)B_{k,n})=\gcd(B_{k,n+1},B_{k,n})\gcd(B_{k,n+1},1-k)=1\]
by Lemma \ref{lem:4} and the second step of induction. Which leads to the fact that $d=1.$\end{enumerate} 
\end{proof}
\begin{lem}\label{lem:6} If $k \not\equiv 1 \mod 3$, then $\gcd(1-k,C_{k,n})=1,$ for $n \geq 1.$
\end{lem}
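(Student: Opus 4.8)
The plan is to imitate the proof of Lemma \ref{lem:4} almost verbatim, replacing $B_{k,n}$ by $C_{k,n}$ throughout; the only genuine difference shows up in the base case, since $C_{k,1}=3$ rather than $1$. I would argue by strong induction on $n$.

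For the base case $n=1$, note $C_{k,1}=3$, so $\gcd(1-k,C_{k,1})=\gcd(1-k,3)$. Since $k\not\equiv 1\pmod 3$ we have $3\nmid(1-k)$, hence this gcd equals $1$. (If one wants a second anchor, $\gcd(1-k,C_{k,0})=\gcd(1-k,1)=1$ trivially, but it is not needed for the step below.)

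For the inductive step, suppose $n\ge 2$ and that $\gcd(1-k,C_{k,m})=1$ for every $m$ with $1\le m\le n-1$. Put $d=\gcd(1-k,C_{k,n})$. Then $d$ divides the linear combination
\[ C_{k,n}-(1-k)C_{k,n-2}=3k\,C_{k,n-1}\]
obtained from Definition \ref{d1}. Because $d\mid(1-k)$ and $3\nmid(1-k)$, we get $\gcd(d,3)=1$; and since any common divisor of $1-k$ and $k$ divides $(1-k)+k=1$, we also get $\gcd(d,k)=1$. Hence $\gcd(d,3k)=1$, which forces $d\mid C_{k,n-1}$, and therefore $d\mid\gcd(1-k,C_{k,n-1})=1$ by the inductive hypothesis. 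Thus $d=1$, completing the induction.

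I do not expect any real obstacle here: the argument is a transcription of Lemma \ref{lem:4}, so the routine gcd manipulations go through unchanged. The one point worth flagging is that, unlike the balancing case where $B_{k,1}=1$ makes the base case automatic, here the hypothesis $k\not\equiv 1\pmod 3$ is already needed at $n=1$ to remove the common factor $3$ from $C_{k,1}$; once that is in place, everything downstream propagates through the recurrence exactly as in the $B$-case.
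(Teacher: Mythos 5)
Your proof is correct and is exactly what the paper intends: the paper simply states that the proof is similar to that of Lemma \ref{lem:4}, and your argument is that transcription, with the right (and necessary) adjustment at the base case, where $C_{k,1}=3$ forces the use of $k\not\equiv 1 \pmod 3$ to conclude $\gcd(1-k,3)=1$. In fact your handling of the inductive step ($\gcd(d,3)=\gcd(d,k)=1$ so $\gcd(d,3k)=1$) is slightly cleaner than the wording in the paper's Lemma \ref{lem:4}.
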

\begin{proof}The proof of this lemma is similar to the proof of Lemma \ref{lem:4}
\end{proof}
\begin{thm}\label{thm:7} Assume that  $n\geq 1$ and $k \geq 1$ are two integers such that   $k \not\equiv\ 1 \mod 3$.  Then $\gcd
 (C_{k,n},C_{k,n+1})=1$.
\end{thm}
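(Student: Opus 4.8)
The plan is to transcribe the proof of Theorem~\ref{thm:6}, replacing the balancing numbers $B_{k,\cdot}$ by the balancing-Lucas numbers $C_{k,\cdot}$ and inducting on $n$. For the base case $n=1$, the recurrence gives $C_{k,2}=3kC_{k,1}+(1-k)C_{k,0}=9k+(1-k)=8k+1$, so $\gcd(C_{k,1},C_{k,2})=\gcd(3,8k+1)$; since $8k+1\equiv 2k+1\pmod 3$, which is $\equiv 0$ exactly when $k\equiv 1\pmod 3$, the hypothesis $k\not\equiv 1\pmod 3$ forces this gcd to be $1$. As in the balancing case, this is the one place where the congruence condition on $k$ is used.

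For the inductive step, assume $\gcd(C_{k,m},C_{k,m+1})=1$ for all $m\le n$ and set $d=\gcd(C_{k,n+1},C_{k,n+2})$. Then $d$ divides every integer combination of $C_{k,n+1}$ and $C_{k,n+2}$, and from $C_{k,n+2}=3kC_{k,n+1}+(1-k)C_{k,n}$ we get
\[ d\mid\bigl(C_{k,n+2}-3kC_{k,n+1}\bigr)=(1-k)C_{k,n}, \]
hence $d\mid\gcd\bigl(C_{k,n+1},(1-k)C_{k,n}\bigr)$. By Remark~\ref{rem:3} this equals $\gcd(C_{k,n+1},1-k)\cdot\gcd(C_{k,n+1},C_{k,n})$; the first factor is $1$ by Lemma~\ref{lem:6} and the second is $1$ by the induction hypothesis, so $d=1$.

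The only ingredient not already in hand is Lemma~\ref{lem:6}, whose proof is deferred in the excerpt; I would supply it first by the same induction as Lemma~\ref{lem:4}. The base cases $C_{k,0}=1$ and $C_{k,1}=3$ give $\gcd(1-k,C_{k,0})=1$ and $\gcd(1-k,3)=1$ (using $3\nmid 1-k$). In the step, with $d=\gcd(1-k,C_{k,n})$, one has $d\mid\bigl(C_{k,n}-(1-k)C_{k,n-2}\bigr)=3kC_{k,n-1}$; since $3\nmid 1-k$ we get $\gcd(d,3)=1$, and since any common divisor of $1-k$ and $k$ divides $(1-k)+k=1$ we get $\gcd(d,k)=1$, so $d\mid C_{k,n-1}$, and the inductive hypothesis forces $d=1$.

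I do not anticipate a genuine obstacle here: the whole argument is a line-by-line copy of the balancing-number case, and $k\not\equiv 1\pmod 3$ is calibrated exactly to make both the base case and the step $\gcd(\cdot,1-k)=1$ succeed. The only things to double-check are bookkeeping: that the recurrence is applied only on index ranges where it is valid (indices $\ge 2$), and that the degenerate value $k=1$ (which would make $1-k=0$) is harmlessly excluded by hypothesis. One could instead deduce the theorem from the Cassini identity $C_{k,n}^2-C_{k,n-1}C_{k,n+1}=-8(k-1)^n$ together with Lemma~\ref{lem:6} and the observation — immediate from the recurrence modulo $2$ — that every $C_{k,n}$ is odd, forcing any common divisor of two consecutive terms to divide $8$ and hence to equal $1$, but the direct induction above is shorter.
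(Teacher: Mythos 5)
Your proposal is correct and is exactly the argument the paper intends, since the paper omits the proof of Theorem~\ref{thm:7} and merely says it mirrors Theorem~\ref{thm:6}; your adaptation, including the supplied proof of Lemma~\ref{lem:6}, goes through. You also correctly spotted the one genuine difference from the balancing case, namely that the base case $\gcd(C_{k,1},C_{k,2})=\gcd(3,8k+1)$ is where the hypothesis $k\not\equiv 1\pmod 3$ is actually needed, unlike the trivial base case $\gcd(1,3k)=1$ in Theorem~\ref{thm:6}.
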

\begin{proof} We omit the proof of this theorem since it is similar to the proof of Theorem \ref{thm:6}.
\end{proof}
\begin{thm}\label{thm:8} Assume that  $n\geq 0$ and $k \geq 1$ are two integers with  $k \not\equiv\ 1 \mod 3$. Then  $\gcd
 (B_{k,n},C_{k,n})=1$.
\end{thm}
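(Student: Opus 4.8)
The plan is to reduce this to the coprimality $\gcd(B_{k,n},B_{k,n+1})=1$ already established in Theorem \ref{thm:6}, using the linear relation between the two sequences recorded in Corollary \ref{cor:5}.

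First I would dispose of the small cases. For $n=0$ we have $B_{k,0}=0$ and $C_{k,0}=1$, so $\gcd(B_{k,0},C_{k,0})=1$ trivially; for $n=1$ it is immediate since $B_{k,1}=1$. For the generic case, set $d=\gcd(B_{k,n},C_{k,n})$. By Corollary \ref{cor:5} we have $C_{k,n}=B_{k,n+1}+3(1-k)B_{k,n}$, hence
\[
B_{k,n+1}=C_{k,n}-3(1-k)B_{k,n}.
\]
Since $d\mid B_{k,n}$ and $d\mid C_{k,n}$, it divides the right-hand side, so $d\mid B_{k,n+1}$. Therefore $d\mid\gcd(B_{k,n},B_{k,n+1})$, which equals $1$ by Theorem \ref{thm:6} — and this is precisely where the hypothesis $k\not\equiv 1\pmod 3$ enters. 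Consequently $d=1$, which is the desired conclusion.

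The only point needing a little care is that Corollary \ref{cor:5} was obtained from the matrix identities of Theorems \ref{thm:3} and \ref{thm:4}, which are stated for $n\geq 2$. So I would either observe that the identity $C_{k,n}=B_{k,n+1}+3(1-k)B_{k,n}$ persists for $n=0,1$ by direct substitution of the initial values (indeed $B_{k,1}+0=1=C_{k,0}$ and $B_{k,2}+3(1-k)=3k+3-3k=3=C_{k,1}$), or simply run the main argument for $n\geq 2$ and settle $n=0,1$ by hand as above. Beyond this bookkeeping there is no genuine obstacle: the whole content is carried by the linear relation of Corollary \ref{cor:5} together with Theorem \ref{thm:6}.
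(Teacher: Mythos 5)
Your argument is correct and is essentially the paper's own proof: both reduce the claim via Corollary \ref{cor:5} to $\gcd(B_{k,n},B_{k,n+1})=1$ from Theorem \ref{thm:6}, you merely phrasing it through a common divisor $d$ rather than the gcd identity $\gcd(a,b+ca)=\gcd(a,b)$. Your extra care about the small indices $n=0,1$ (where Corollary \ref{cor:5} is stated only for $n\geq 2$) is a minor but welcome refinement that the paper's proof glosses over.
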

\begin{proof} From 
Corollary \ref{cor:5}, $C_{k,n}=B_{n+1}+3(1-k)B_{k,n}.$ Now, $\gcd
 (B_{k,n},C_{k,n})=\gcd(B_{k,n},B_{n+1}+3(1-k)B_{k,n})=\gcd(B_{k,n},B_{k,n+1})=1$.
\end{proof}
\begin{thm}\label{thm:9} Let $m,n\geq 1$ be two integers and $k\geq 1$ be an integer such that $k\not\equiv 1 \mod 3$.  Then $\gcd(B_{k,m},B_{k,n})=B_{k,\gcd(m,n)}$.
\end{thm}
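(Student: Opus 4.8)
The plan is to follow the classical Fibonacci-style argument for $\gcd$ identities, which rests on three ingredients already available: the addition formula of Lemma~\ref{lem:2}, the coprimality statements of Lemma~\ref{lem:4} and Theorem~\ref{thm:6}, and the divisibility Lemma~\ref{lem:3}. First I would establish the key reduction lemma: for $m \geq n \geq 2$ one has $\gcd(B_{k,m},B_{k,n}) = \gcd(B_{k,m-n},B_{k,n})$. To see this, write $m = (m-n) + n$ and apply Lemma~\ref{lem:2} to get $B_{k,m} = B_{k,m-n}B_{k,n+1} + (1-k)B_{k,m-n-1}B_{k,n}$. Then $\gcd(B_{k,m},B_{k,n}) = \gcd(B_{k,m-n}B_{k,n+1},B_{k,n})$ since the second summand is a multiple of $B_{k,n}$. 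By Remark~\ref{rem:3} this equals $\gcd(B_{k,m-n},B_{k,n})\gcd(B_{k,n+1},B_{k,n})$, and the last factor is $1$ by Theorem~\ref{thm:6}, giving the reduction.

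Next I would iterate this reduction, which is exactly the Euclidean algorithm running on the indices: starting from the pair $(m,n)$, repeatedly replacing the larger index by its remainder modulo the smaller shows that $\gcd(B_{k,m},B_{k,n}) = \gcd(B_{k,d},B_{k,0})$ where $d = \gcd(m,n)$, up to handling the base of the recursion carefully. Since $B_{k,0}=0$, we have $\gcd(B_{k,d},B_{k,0}) = \gcd(B_{k,d},0) = B_{k,d}$ (taking the gcd of a positive integer with $0$). Conversely — or really as a sanity check built into the same argument — Lemma~\ref{lem:3} already tells us $B_{k,d} \mid B_{k,m}$ and $B_{k,d}\mid B_{k,n}$ since $d\mid m$ and $d\mid n$, so $B_{k,d}\mid\gcd(B_{k,m},B_{k,n})$; the reduction chain supplies the reverse divisibility.

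The main obstacle is bookkeeping at the boundary of the recursion, since Lemma~\ref{lem:2} and Theorem~\ref{thm:6} are stated with hypotheses like $n \geq 2$, while the Euclidean descent on indices naturally produces small values ($0$ and $1$). I would handle this by checking the cases where an index equals $0$ or $1$ directly: $\gcd(B_{k,m},B_{k,1}) = \gcd(B_{k,m},1) = 1 = B_{k,1} = B_{k,\gcd(m,1)}$, and the case $d\mid m$ with $d=m$ is trivial. One should also note that the descent only encounters an index of $1$ precisely when $d=\gcd(m,n)=1$, in which case the final answer $B_{k,1}=1$ is consistent; otherwise every index appearing in the chain is a positive multiple of $d \geq 2$, so the hypotheses of the auxiliary results are met throughout. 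Assembling these pieces — the one-step reduction, its iteration via the Euclidean algorithm, and the terminal evaluation using $B_{k,0}=0$ — completes the proof.
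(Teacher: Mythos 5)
Your proposal is correct and follows essentially the same route as the paper: the addition formula of Lemma~\ref{lem:2}, the coprimality of consecutive terms from Theorem~\ref{thm:6} (with Remark~\ref{rem:3} used exactly as the paper does), and a Euclidean descent on the indices. The only difference is organizational — the paper performs one remainder step $n=mq+r$ per strong-induction step, absorbing the $B_{k,mq}$ term via Lemma~\ref{lem:3}, whereas you iterate a single-subtraction reduction and terminate at $B_{k,0}=0$ (or equal indices), which is a cosmetic variant of the same argument.
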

\begin{proof} We can prove this theorem by mathematical induction. Without loss of generality we may assume that $m\leq n.$ The result is trivial when $n=1,$ so assume that $\gcd(B_{k,m},B_{k,j})=B_{k,\gcd(m,j)}$ for all $j <n.$ Next use the division algorithm to write $n=mq+r,$ where $0\leq r<m.$ Thus, $B_{k,n}=B_{k,mq+r}= B_{k,mq+1}B_{k,r}+(1-k)B_{k,mq}B_{k,r-1}$, by Lemma \ref{lem:2}. Also, by Lemma \ref{lem:3} we have $B_{k,m}|B_{k,mq}$, so we have \[B_{k,n}= B_{k,mq+1}B_{k,r}+(1-k)xB_{k,m}B_{k,r-1}\] 
for some integer $x$. Note, \begin{align*} 
\gcd(B_{k,n}, B_{k,m})&=\gcd(B_{k,mq+r},B_{k,m})\\
&=\gcd(B_{k,mq+1}B_{k,r}+(1-k)xB_{k,m}B_{k,r-1},B_{k,m})\\&=\gcd(B_{k,mq+1}B_{k,r},B_{k,m})\end{align*}
Suppose that $d=\gcd(B_{k,n},B_{k,m})=\gcd(B_{k,mq+1}B_{k,r},B_{k,m}).$ Then $d|B_{k,m}$ and $d|B_{k,mq+1}B_{k,r}$, by Theorem \ref{thm:6} we have $d\nmid B_{k,mq+1}$ thus $d|B_{k,r}$ which gives us that $d|\gcd(B_{k,m},B_{k,r}).$ Assume that $d^{\prime}=\gcd(B_{k,m},B_{k,r}),$ it is clear that $d^{\prime}$ divides both $B_{k,m}$ and $B_{k,mq+1}B_{k,r}$ which means $d^{\prime}|d$, thus $d=d^{\prime.}$ Now, \begin{align*}\gcd(B_{k,n},B_{k,m})&=\gcd(B_{k,m},B_{k,r})\\
&=B_{k,\gcd(m,r)}& \text{by induction}\\
&=B_{k,\gcd(m,mq+r)}\\
&=B_{k,\gcd(m,n)},\end{align*} 
as desired.
\end{proof}

\end{document}